\theoremstyle{plain}
\newtheorem{thm}{Theorem}[section]
\newtheorem*{thm*}{\bf Theorem }
\newtheorem{lem}[thm]{Lemma}
\newtheorem{prop}[thm]{Proposition}
\newtheorem*{prop*}{\bf Proposition}
\newtheorem{cor}[thm]{Corollary}
\newtheorem{rem}[thm]{Remark}
\newtheorem{q}[thm]{Question}
\theoremstyle{definition}
\newtheorem{defn}[thm]{Definition}
\newtheorem{eg}[thm]{Example}
\numberwithin{equation}{section}
\newcommand{\I}[1]{ \mathrm{Ind} ({#1}) }
\newcommand{\setdiff}{\! \setminus \!}
\newcommand{\mi}{\! - \!}
\newcommand{\pl}{\! + \!}
\newcommand{\mb}[1]{\mathbb {#1}}
\newcommand{\image}[1]{\mathrm{im}\ \! {#1}}
\begin{document}
\title{On the homology of independence complexes}
\author{Marko Berghoff}
\address{Institut f\"ur Mathematik, Humboldt-Universit\"at zu Berlin}
\email{berghoff@math.hu-berlin.de}
\begin{abstract}
The independence complex $\mathrm{Ind}(G)$ of a graph $G$ is the simplicial complex formed by its independent sets.
This article introduces a deformation of the simplicial boundary map of $\mathrm{Ind}(G)$ that gives rise to a double complex with trivial homology. Filtering this double complex in the right direction induces a spectral sequence that converges to zero and contains on its first page the homology of the independence complexes of $G$ and various subgraphs of $G$, obtained by removing independent sets and their neighborhoods from $G$.
It is shown that this spectral sequence may be used to study the homology of $\mathrm{Ind}(G)$. 
Furthermore, a careful investigation of the sequence's first page exhibits a relation between the cardinality of maximal independent sets in $G$ and the vanishing of certain homology groups of the independence complexes of some subgraphs of $G$. This relation is shown to hold for all paths and cyclic graphs.
\end{abstract}
\maketitle

\section{Introduction}

An \textit{independent set} in a graph $G=(V,E)$ is a subset of its vertices $I \subset V$ such that no two elements in $I$ are adjacent. More generally, a subset $I\subset V$ is \textit{$r$-independent} if every connected component of the \textit{induced subgraph} $G[I]:=(I,E')$ with $E'=\{e \in E \mid e\in I\times I\}$ has at most $r$ vertices.
Since the property of being $r$-independent is closed under taking subsets, the set of all $r$-independent sets of $G$ forms a simplicial complex, the \textit{$r$-independence complex} $\mathrm{Ind}_r( G)$ of $G$; 
the vertex set of $\mathrm{Ind}_r( G)$ is $V$ and $I\subset V$ forms a simplex if and only if $I$ is $r$-independent in $G$. In the following we write $\I G$ for $\mathrm{Ind}_1( G)$. See Figure \ref{fig:g,icg,2icg} for an example.

\begin{figure}[h]\label{fig:g,icg,2icg}
$G$ \ \begin{tikzpicture}[scale=1]
  \coordinate  (v1) at (-1,0); 
   \coordinate  (v2) at (0,0);
   \coordinate (v3) at (1,0); 
   \draw (v1) -- (v2) ;
   \draw (v2) -- (v3) ;
  \fill[blue] (v1) circle (.0666cm) node[left]{$v_1$};
   \fill[red] (v2) circle (.0666cm) node[above]{$v_2$};
\fill[cyan] (v3) circle (.0666cm) node[right]{$v_3$};
   \end{tikzpicture}
   \hspace{1cm}
   $\I G$ \ \begin{tikzpicture}[scale=1]
  \coordinate  (v1) at (0,0.5); 
   \coordinate  (v2) at (0,-0.5);
   \coordinate (v3) at (0.5,0); 
   \draw (v1) -- (v2) ;
  \fill[blue] (v1) circle (.0666cm) node[left]{$v_1$};
   \fill[cyan] (v2) circle (.0666cm) node[left]{$v_3$};
\fill[red] (v3) circle (.0666cm) node[right]{$v_2$};
   \end{tikzpicture}
   \hspace{1cm}
  $\mathrm{Ind}_2(G)$ \ \begin{tikzpicture}[scale=1]
  \coordinate  (v1) at (-0.5,0); 
   \coordinate  (v2) at (0.5,0);
   \coordinate (v3) at (0,1); 
   \draw (v1) -- (v2) ;
   \draw (v2) -- (v3) ;
   \draw (v3) -- (v1);
  \fill[cyan] (v1) circle (.0666cm)node[left]{$v_3$};
   \fill[red] (v2) circle (.0666cm)node[right]{$v_2$};
\fill[blue] (v3) circle (.0666cm)node[right]{$v_1$};
   \end{tikzpicture}
   \caption{A graph, its independence complex and its 2-independence complex}
\end{figure}
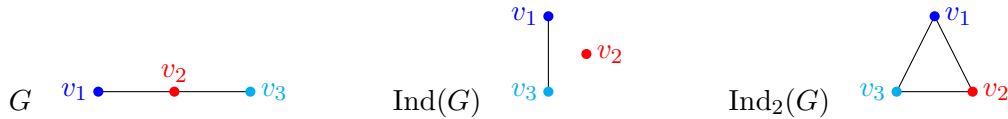

These complexes are a special instance of a great variety of simplicial complexes associated to graphs. See \cite{jonsson} for an overview. Some of these complexes are related to each other. For example, the independence complex of a graph is the \textit{matching complex} of its \textit{line graph} and the \textit{clique complex} of its \textit{dual graph}.

The topology of these types of simplicial complexes is a well-studied topic. For independence complexes, most work has been done on connectivity and homotopy-type, cf.\
\cite{kozlov,ehrenborg,szabo-tardost,engstrom2,engstrom,csorba,ADAMASZEK2012,adamaszek}, with applications, for example, to the study of graph colorings \cite{babson-kozlov,barmak}. For the case of higher independence complexes, see \cite{szabo-tardost,deshp-singh}, as well as references therein. 

The homology groups of independence complexes have also been investigated, see \cite{meshulam,jonsson2,dao-schweig}.  
Here, applications reach from statistical physics, where the Euler characteristic of $\I L$ for $L$ a periodic lattice is referred to as its \textit{Witten index} \cite{bm-sv-ne,hui-schout,adamaszek2}, to group theory, where certain local homology groups of the classical braid groups are related to the homology of certain (higher) independence complexes \cite{salvetti,paolini-salvetti}. 
\newline

The main result of this article is a computational recipe for calculating the homology groups of $\I G$ (and to some extent $\mathrm{Ind}_r(G)$, see the discussion in Remark \ref{rem:higher}). It is implied by Corollary \ref{cor:specseq} of Theorem \ref{thm:acyclic}, and strengthened by the properties established in Proposition \ref{thm:properties}. 

\begin{thm}\label{thm:1}
 Let $G$ be a finite simple graph. There exists a spectral sequence whose $E^1$-page contains a copy of the homology of $\I G$. Its other entries are given by homology groups of independence complexes of graphs $G \mi N[U]$, obtained from $G$ by deleting all vertices in $U \subset V$ together with their neighbors, $U$ running over all independent sets of $G$. The sequence collapses to $E^\infty$ which has one entry isomorphic to $\mb Z$ and all other vanishing. Moreover, the differential $d^1:E^1 \to E^1$ is explicitly given and easy to compute.
\end{thm}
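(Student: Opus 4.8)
The plan is to realise $H_*(\I G)$ as one line in the first page of a spectral sequence coming from an auxiliary acyclic complex, so that the single surviving class on $E^\infty$ forces the balancing relations between $H_*(\I G)$ and the groups $H_*(\I{G \mi N[U]})$. For an independent set $U$, the independent sets of $G \mi N[U]$ are exactly the sets $I$ with $U \cupdot I$ independent in $G$ and $I$ disjoint from $N[U]$. I would therefore take as generators, in bidegree $(p,q)$, the pairs $(U,I)$ with $U$ independent of size $p$ and $I$ an independent set of $G \mi N[U]$ of size $q$, so that the $p$-th column is $\bigoplus_{|U|=p} C_{\bullet}(\I{G \mi N[U]})$. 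The vertical differential is the simplicial boundary acting on the $I$-slot (the boundary in the link $\I{G \mi N[U]}$). The horizontal direction is exactly the ``deformation'' advertised in the introduction: one deforms the plain simplicial boundary by allowing a vertex to be absorbed into the $U$-slot, producing, for $v \in U$, a transfer term built from the inclusions of links $\I{G \mi N[U]} \hookrightarrow \I{G \mi N[U \setdiff v]}$ coming from $U \mapsto U \setdiff v$. I would fix the relative sign by the Koszul rule $(\mi 1)^p$ so that $\partial^h \partial^v \pl \partial^v \partial^h = 0$.

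The hard part will be Step 2, the acyclicity of the total complex (this is Theorem \ref{thm:acyclic}): I must show it has the homology of a point, i.e. is acyclic except for a single $\mb Z$. The clean route is to run the \emph{other} filtration first. Grouping generators by the underlying independent set $S = U \cupdot I$, the horizontal differential restricted to a fixed $S$ should become the reduced simplicial boundary of the full simplex on the vertex set $S$ — this is the whole point of the deformation, as it is what turns ``forget $v$ from $U$'' into ``choose a face of $\Delta^S$''. Since a nonempty simplex is contractible, every such row is then exact, and only the class of $S = \emptyset$ survives, giving total homology $\mb Z$. The delicate computation — and the only place where the construction can actually fail — is verifying that the correction term does exactly this with consistent signs, and simultaneously that $\partial^h$ and $\partial^v$ genuinely anticommute; the naive (undeformed) inclusion of links does \emph{not} give an acyclic total complex, so this balancing is the crux.

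For the interesting spectral sequence (Corollary \ref{cor:specseq}) I would instead filter by the columns $p = |U|$ and compute vertical homology first, yielding
\[
  E^1_{p,q} \;=\; \bigoplus_{\substack{U \text{ independent}\\ |U| = p}} H_{q}\bigl(\I{G \mi N[U]}\bigr),
\]
so that the column $p = 0$ is precisely $H_*(\I G)$ and every other entry is a homology group of an independence complex of some $G \mi N[U]$, as claimed. The differential $d^1 \colon E^1_{p,q} \to E^1_{p-1,q}$ is the map induced on homology by $\partial^h$, i.e. the signed sum of the transfer maps associated with the link inclusions $\I{G \mi N[U]} \hookrightarrow \I{G \mi N[U \setdiff v]}$; since these are read off directly from the combinatorics of $G$, this gives the final clause about $d^1$ being explicit and easy to compute.

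Finally, by Step 2 the double complex is acyclic up to a single $\mb Z$, so both spectral sequences converge to the same one-dimensional graded group; hence the column-filtration sequence collapses to an $E^\infty$ with one entry isomorphic to $\mb Z$ and all others vanishing. Reading off the $p=0$ line then packages $H_*(\I G)$ together with the $H_*(\I{G \mi N[U]})$ and the explicit $d^1$, which is the content of Theorem \ref{thm:1}; the additional structural features (naturality, and control of the higher differentials) are what I would isolate as Proposition \ref{thm:properties}. Beyond Step 2 the only point requiring care is the reduced-versus-unreduced bookkeeping that places the surviving $\mb Z$ in the correct total degree.
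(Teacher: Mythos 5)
Your construction coincides, up to a change of coordinates, with the paper's: your pairs $(U,I)$ are exactly its markings ($U$ the 2-marked and $I$ the 1-marked vertices), your vertical differential is its $d$, the deformation ``absorb a vertex of $I$ into the $U$-slot'' is its $\delta$, and your Step 2 --- regrouping by $S=U\cupdot I$ so that the marker-switching differential becomes the augmented boundary of the full simplex on $S$, exact unless $S=\emptyset$ --- is precisely how the paper proves Theorem \ref{thm:acyclic} (via Proposition \ref{prop:deltahomtrivial}). Filtering by $|U|$ then reproduces Corollary \ref{cor:specseq}. So the architecture is right and is the paper's architecture.

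There is, however, a concrete error in your description of the horizontal structure, and it contradicts your own Step 2. First, in your coordinates $(p,q)=(|U|,|I|)$ the deformation has bidegree $(+1,-1)$, not $(\pm 1,0)$: moving a vertex from $I$ to $U$ simultaneously enlarges $U$ and shortens $I$. So with your gradings there is no honest double complex with a Koszul sign $(-1)^p$; the paper repairs this by grading by $(i,j)=(|U|+|I|,\,|U|)$, where $d$ has bidegree $(-1,0)$ and $\delta$ bidegree $(0,+1)$, at the price of the ``unusual index shifts'' discussed in the proof of Corollary \ref{cor:specseq}. Second, and more seriously, your final clause asserts $d^1\colon E^1_{p,q}\to E^1_{p-1,q}$, induced by the link inclusions $\I{G\mi N[U]}\hookrightarrow \I{G\mi N[U\setdiff v]}$ --- a $q$-preserving, $U$-shrinking map. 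That is exactly the naive, undeformed differential which, as you yourself observe, cannot yield an acyclic total complex: already for $G=K_2$ the generators in total degree $|U|+|I|$ have ranks $1,4,0$, so the Euler characteristic is $-3$ and no choice of signs makes the total homology a single $\mb Z$. The actual $d^1$ is induced by $\delta$: it sends a class in $H_q(\I{G\mi N[U]})$ to a signed sum of classes in $H_{q-1}(\I{G\mi N[U\cup\{v\}]})$, \emph{enlarging} $U$ and \emph{lowering} the homological degree by one --- a connecting-homomorphism-type map (compare the boundary map $\partial$ in the long exact sequence \eqref{eq:les} used for Theorem \ref{thm:vp}), not a map induced by inclusions. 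With $d^1$ as you state it the degrees do not even match, since $d^1$ must lower the total degree (which on your $E^1$ is essentially $q$), and nothing could then cancel against the $p=0$ column, breaking the collapse to a single $\mb Z$. Everything else in your proposal --- the acyclicity argument, the identification of the $E^1$ entries, and the convergence bookkeeping --- is sound and agrees with the paper.
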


This allows to study the homology of $\I G$ by monitoring the typical dramatic action associated with spectral sequences.\footnote{citing J.F.\ Adams from \cite{ug-specseq} `` \ldots \ the behavior of this spectral sequence \ldots \ is a bit like an Elizabethan drama, full of action, in which the business of each character is to kill at least one other character, so that at the end of the play one has the stage strewn with corpses and only one actor left alive (namely the one who has to speak the last few lines)." }
It effectively reduces the computation of the homology of $\I G$ to the problem of determining the homology of independence complexes of the graphs $G \mi N[U]$ and inspecting the first page(s) of the above mentioned spectral sequence. In good cases this allows to determine all homology groups of $\I G$. In general one finds at least some relations between them (and the homology of the building blocks $G\mi N[U]$). 

Moreover, the ``empirical data'' hints at a rather peculiar property of independence complexes that is satisfied by a large family of examples, including all paths and cyclic graphs; it is however not true in general.\footnote{Thanks to Dmitry Feichtner-Kozlov for pointing this out. The reader is invited to find a counter-example on her/his own. \textit{Hint:} Start with a disconnected graph...}  

\begin{thm}\label{thm:2}
Let $G$ be a path or a cyclic graph. If $G$ has no maximal independent set of cardinality $p$, then
\begin{equation*}
 \tilde H_{p-q-1}(\I {G\mi N[U]}) \cong 0
\end{equation*}
holds for all $q>0$ and all independent sets $U\subset V$ with $|U|=q$.
\end{thm}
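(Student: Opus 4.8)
The plan is to prove the contrapositive: I will show that if, for some $q>0$ and some independent set $U$ with $|U|=q$, the group $\tilde H_{p-q-1}(\I{G \mi N[U]})$ is nonzero, then $G$ admits a maximal independent set of cardinality exactly $p$. This is a direct argument and does not require the spectral sequence of Theorem \ref{thm:1}.

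The first and structurally decisive step is to observe why the hypothesis $q>0$ is needed. For a path or a cycle, deleting the closed neighborhood of a \emph{nonempty} independent set $U$ cuts the graph into a disjoint union of (possibly empty) subpaths,
\[
 G \mi N[U] \;=\; P_{n_1} \sqcup \cdots \sqcup P_{n_m}.
\]
For a cycle this severing genuinely requires $U\neq\emptyset$; with $q=0$ one keeps the full cycle, whose independence complex may be a wedge of two spheres, and the dimension count below breaks down. Since the independence complex of a disjoint union is the join of the independence complexes, $\I{G\mi N[U]}\cong \I{P_{n_1}}*\cdots*\I{P_{n_m}}$, with empty factors acting as units for the join. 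I would then invoke the classical homotopy computation \cite{kozlov}: $\I{P_n}$ is contractible when $n\equiv 1 \pmod 3$ and is homotopy equivalent to the single sphere $S^{\lceil n/3\rceil-1}$ otherwise. Using $S^a*S^b\simeq S^{a+b+1}$ and the fact that joining with a contractible space yields a contractible space, it follows that $\I{G\mi N[U]}$ is either contractible or homotopy equivalent to a \emph{single} sphere $S^D$ (with the convention $S^{-1}=\{\emptyset\}$ in the degenerate case where $G\mi N[U]$ is empty).

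The heart of the matter is a numerical identity. If $\tilde H_{p-q-1}(\I{G\mi N[U]})$ is nonzero then $\I{G\mi N[U]}$ is not contractible, hence it is $S^{p-q-1}$ with reduced homology concentrated in degree $D=p-q-1$. Reading off $D$ from the join and from the individual sphere dimensions $\lceil n_i/3\rceil-1$ gives $D+1=\sum_i \lceil n_i/3\rceil$. I would then recognize the right-hand side as the independent domination number of $G\mi N[U]$, that is, the minimum cardinality of an inclusion-maximal independent set, using the elementary facts that $i(P_n)=\lceil n/3\rceil$ and that $i(\cdot)$ is additive over disjoint unions. Consequently $G\mi N[U]$ possesses a maximal independent set $W'$ of cardinality $D+1=p-q$.

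Finally I would lift $W'$ back to $G$. As $W'\subseteq V\setminus N[U]$, the set $U\cup W'$ is independent in $G$, and it is inclusion-maximal: a vertex outside $U\cup W'$ lies either in $N[U]$, where it is adjacent to $U$, or in $G\mi N[U]$, where maximality of $W'$ blocks it. Hence $U\cup W'$ is a maximal independent set of $G$ of cardinality $q+(p-q)=p$, contradicting the hypothesis. I expect the main obstacle to be the clean identification of the topological sphere dimension with the combinatorial independent domination number — verifying $\dim\I{P_n}=i(P_n)-1$ and that this relation survives the passage through joins and disjoint unions — since it is precisely this coincidence that turns a vanishing-homology statement into a statement about the cardinalities of maximal independent sets.
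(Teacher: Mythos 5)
Your proof is correct, but it takes a genuinely different route from the paper's. The paper fixes an independent set $I$ of cardinality $p$ and runs a descending induction on $|U|$ for $U\subset I$: using the cofiber sequence \eqref{eq:cofiber} of Adamaszek and its long exact sequence \eqref{eq:les}, it shows that vanishing of $H_{p-k}(G\mi N[U])$ propagates to $H_{p-k+1}(G\mi N[U\setdiff v])$, via a three-way case analysis of what happens when $N(v)$ is put back (isolated vertices appear; path components lengthen; nothing changes). You instead prove the contrapositive in one shot: since $q>0$ forces $G\mi N[U]$ to be a disjoint union of paths (the same reduction to paths the paper makes), Kozlov's classification \eqref{eq:homtype} together with the join formula shows $\I{G\mi N[U]}$ is either contractible or a \emph{single} sphere of dimension $\sum_i\lceil n_i/3\rceil-1$, and your identification of $\sum_i\lceil n_i/3\rceil$ with the independent domination number — the facts $i(P_n)=\lceil n/3\rceil$ and additivity over disjoint unions are indeed elementary and check out — lets you lift a minimum maximal independent set $W'$ of $G\mi N[U]$ to the maximal independent set $U\cup W'$ of $G$ of cardinality exactly $p$; the lifting step is sound, since vertices in $N[U]$ are dominated by $U$ and vertices of $G\mi N[U]$ by $W'$, and your convention $S^{-1}=\{\emptyset\}$ correctly handles the case $p=q$ where $U$ itself is maximal. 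What each approach buys: yours is shorter, avoids the exact-sequence bookkeeping, is constructive (it exhibits the maximal $p$-set witnessing nonvanishing, and in fact characterizes exactly when $\tilde H_{p-q-1}$ is nonzero), and makes the role of $q>0$ transparent; the paper's inductive scheme depends less on having closed-form formulas for both the homotopy type and $i(\cdot)$, which matters for the paper's open question about extending the theorem, e.g.\ to forests, where only a point-or-sphere dichotomy is known. One small inaccuracy in your side remark: for cycles the dimension count fails not in the wedge case $n=3k$ (there $\I{C_{3k}}\simeq S^{k-1}\vee S^{k-1}$ still has homology in degree $i(C_{3k})-1$) but for $n\equiv 1\pmod 3$, where $\I{C_{3k+1}}\simeq S^{k-1}$ while $i(C_{3k+1})=k+1$; this does not affect your proof, which only uses $q>0$.
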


This is proven in Theorem \ref{thm:vp}, using the observations made in Proposition \ref{thm:properties}. 
\newline

The basic idea to set up the desired spectral sequence is to consider a deformation of the simplicial boundary map $d$ of $\I G$. 
For this we model the simplicial chain complex of $\I G$ by a chain complex generated by certain decorations of the vertices of $G$. These decorations, hereafter called \textit{markings}, are given by maps $m: V \to \{  0,1\}$ with $m^{-1}(1)$ independent in $G$. The differential $d$ is given by a signed sum over all ways of removing the marking of a single vertex $v \in m^{-1}(1)$. 

We then enhance this picture by introducing a second type of marking, i.e.\ we consider now maps $m:V \to \{ 0,1,2 \}$ with $m^{-1}(\{ 1,2 \})$ independent. This allows to define a second differential $\delta$ that changes the first into the second type. The two differentials anticommute, so that we can form a double complex of markings on $G$, graded by the number of markings of the first and second type, called \textit{1-} and \textit{2-markings}, respectively. Setting $D:=d + \delta$ defines a differential on the total complex
\begin{equation*}
T(G):=\bigoplus_n T_n(G) ,\ T_n(G):=\bigoplus_{i-j=n}T_{i,j}(G),
\end{equation*}
where $T_{i,j}(G)$ is the free abelian group generated by markings $m:V \to \{ 0,1,2 \}$ with $i$ marked vertices in total and $j$ 2-marked vertices.

It turns out that this total complex $(T(G),D)$ is acyclic. 
Filtering it by the number of 2-markings induces a spectral sequence with first page
\begin{equation*}
E^1_{p,q}= H_p(T_{\bullet,q}(G), d).
\end{equation*}
Thus, the row $q=0$ contains the homology of $\I G$, while the entries with $q>0$ are identified with the homology groups of graphs $G\mi N[U]$ for $U\subset V$ independent and $|U|=q$. Since the spectral sequence converges to zero, this allows to apply standard techniques from homological algebra to study the homology of $\I G$.
\newline

The whole construction is based on the article \cite{ksvs} where two similar complexes (of edge- and cycle-markings) were used to encode consistency conditions in the perturbative quantization of non-abelian gauge theories. 

The masters thesis \cite{knispel} by Knispel studied the cohomology of these complexes in detail; he showed that every variant of marking can be pulled back to the case of marking vertices in an associated simple graph $G$, allowing to compute the cohomology of all such complexes at once (a streamlined version of this construction, using the above introduced spectral sequence in a slightly different disguise, can be found in \cite{mb-ak}). He then continued to study the nontrivial part $d$ of the differential $D=d+\delta$, relating it to the notion of independent sets and cliques in a graph $G$. 

In this article we show that this relation can in fact be pushed much further. Firstly, the map $d$ really \emph{is} the boundary map of the independence complex of $G$, and secondly, the total complex $(T(G),D)$ may be used to study its homology.
\newline

The exposition is organized as follows. In Section \ref{sec:ismark} we define the notion of markings to model independent sets in a graph $G$. We then introduce two differentials $d$ and $\delta$ to set up a double complex $(T(G),D)$ that contains a copy of the simplicial chain complex of $\I G$. 

The next two sections recite the results of \cite{mb-ak} (Sections 3.1 and 3.2 therein). In Section \ref{sec:delta} the vertical differential $\delta$ of $(T(G),D)$ is studied and its homology is shown to be trivial, except in bidegree $(0,0)$ where it is isomorphic to $\mb Z$. We use this in Section \ref{sec:doublecomplex} to compute the homology of the total complex $(T(G),D)$, showing that it is acyclic as well.

Section \ref{sec:homology} contains the heart of this article. It introduces the spectral sequence of Theorem \ref{thm:1} that allows to study the homology of $\I G$. We proceed then by investigating its most important properties. The section finishes with a discussion of the property mentioned in Theorem \ref{thm:2}, the relation between the nonexistence of maximal independent sets and the vanishing of certain homology groups of independence complexes of subgraphs of $G$. We proof this for the case of paths and cyclic graphs. The extension of this statement to other families of graph (and higher independence complexes) is left as an open problem.

In Section \ref{sec:eg} we look at some elaborated examples.

\section{Independent sets and markings}\label{sec:ismark}
Let $G=(V,E)$ be a finite, simple graph. 
We start by introducing a model for independent sets $I \subset V$ of $G$. For this we simply label the vertices in $I$, and call such a labeling a \textit{marking} of $G$. The raison d’être is that this point of view allows to 
\begin{enumerate}
 \item model the simplicial boundary map of $\I G$ as a map on $G$ that removes labels on vertices,
 \item introduce a second kind of label which gives rise to a deformation of the simplicial boundary map of $\I G$.
\end{enumerate}

In what follows everything will depend on the chosen graph $G$, but whenever there is no risk of confusion, this dependence is dropped from notation. 

Throughout this paper $H$ and $\tilde H$ denote homology and reduced homology, respectively, with integer coefficients.

\begin{defn}\label{defn:marking}
Let $G$ be a graph. A \textit{marking} of $G$ is a map $m:V \to \{0,1,2\}$ such that $V_m:=m^{-1}(\{1,2\})$ is an independent set in $G$. For $i=1,2$ we refer to the elements of $V_i:=m^{-1}(i)$ as \textit{$i$-marked} and to the elements of $V_0:=m^{-1}(0)$ as \textit{unmarked}.
\end{defn}

\begin{defn}\label{defn:T}
Choose an order on $V$ such that $V=\{v_1,\ldots, v_n\}$ with $v_i<v_j$ if and only if $i<j$.

 Let $T_{i,j}=T_{i,j}(G)$ be the free abelian group generated by all markings of $G$ with $i$ marked and $j$ 2-marked vertices,
\begin{equation*}
T_{i,j}:= \mb Z \big\langle m:V \to \{0,1,2\} \mid |V_m|=i, |V_2|=j  \big\rangle.
\end{equation*}

Define linear maps $d: T_{i,j}\to T_{i-1,j}$ and $\delta: T_{i,j}\to T_{i,j+
1}$ by
\begin{align*}
d m & :=\sum_{v \in V_1} (-1)^{\# \{ w \in V_1 \mid w<v \} } m_{v\mapsto 0},
\\
\delta m & :=\sum_{v \in V_1} (-1)^{\# \{ w \in V_1 \mid w<v \} } m_{v\mapsto 2},
\end{align*}
where 
\begin{equation*}
m_{v\mapsto i}(x):=
\begin{cases} m(x) & \text{ if } x\neq v,\\
 i & \text{ if } x=v,
\end{cases}
\end{equation*}
 changes the marking $m$ by relabeling the vertex $v$ with $i$. 
\end{defn}

\begin{eg}\label{eg:firstexample}
Let $G=P_3=
\raisebox{0.023cm}{\begin{tikzpicture}[scale=1]
  \coordinate  (v1) at (-1,0); 
   \coordinate  (v2) at (0,0);
   \coordinate (v3) at (1,0); 
   \draw (v1) -- (v2) ;
   \draw (v2) -- (v3) ;
    \filldraw[fill=black] (v1) circle (.0666);
  \filldraw[fill=black] (v2) circle (.0666) ;
  \filldraw[fill=black] (v3) circle (.0666);
   \end{tikzpicture}}$
with $V=\{v_1,v_2,v_3\}$ ordered from left to right. Let us denote 1-marked and 2-marked vertices by orange and white filled circles. For instance, the marking 
\begin{equation*}
m: v_1 \longmapsto 1, v_2 \longmapsto 0,v_3 \longmapsto 1
\end{equation*}
is represented by 
\begin{equation*}
m= 
\begin{tikzpicture}[scale=1]
  \coordinate  (v1) at (-1,0); 
   \coordinate  (v2) at (0,0);
   \coordinate (v3) at (1,0); 
   \draw (v1) -- (v2);
   \draw (v2) -- (v3);
   \filldraw[fill=orange] (v1) circle (0.1);
  \filldraw[fill=black] (v2) circle (.0666) ;
  \filldraw[fill=orange] (v3) circle (.1);
   \end{tikzpicture}.
   \end{equation*}
   
   Computing the differentials gives
\begin{align*}
 d \
 \begin{tikzpicture}[scale=1]
  \coordinate  (v1) at (-1,0); 
   \coordinate  (v2) at (0,0);
   \coordinate (v3) at (1,0); 
   \draw (v1) -- (v2);
   \draw (v2) -- (v3);
   \filldraw[fill=orange] (v1) circle (0.1);
  \filldraw[fill=black] (v2) circle (.0666) ;
  \filldraw[fill=orange] (v3) circle (.1);
   \end{tikzpicture}
    &= 
\begin{tikzpicture}[scale=1]
  \coordinate  (v1) at (-1,0); 
   \coordinate  (v2) at (0,0);
   \coordinate (v3) at (1,0); 
   \draw (v1) -- (v2) ;
   \draw (v2) -- (v3) ;
  \filldraw[fill=black] (v1) circle (.0666);
  \filldraw[fill=black] (v2) circle (.0666) ;
  \filldraw[fill=orange] (v3) circle (.1);
   \end{tikzpicture}
   -
    \begin{tikzpicture}[scale=1]
  \coordinate  (v1) at (-1,0); 
   \coordinate  (v2) at (0,0);
   \coordinate (v3) at (1,0);  
   \draw (v1) -- (v2) ;
   \draw (v2) -- (v3) ;
   \filldraw[fill=orange] (v1) circle (.1);
  \filldraw[fill=black] (v2) circle (.0666) ;
  \filldraw[fill=black] (v3) circle (.0666);
   \end{tikzpicture} 
   , \\
    \delta \ 
    \begin{tikzpicture}[scale=1]
  \coordinate  (v1) at (-1,0); 
   \coordinate  (v2) at (0,0);
   \coordinate (v3) at (1,0); 
   \draw (v1) -- (v2);
   \draw (v2) -- (v3);
   \filldraw[fill=orange] (v1) circle (0.1);
  \filldraw[fill=black] (v2) circle (.0666) ;
  \filldraw[fill=orange] (v3) circle (.1);
   \end{tikzpicture}
    &  = 
   \begin{tikzpicture}[scale=1]
  \coordinate  (v1) at (-1,0); 
   \coordinate  (v2) at (0,0);
   \coordinate (v3) at (1,0); 
    \draw (v1) -- (v2) ;
   \draw (v2) -- (v3) ;   
   \filldraw[fill=white] (v1) circle (.1);
  \filldraw[fill=black] (v2) circle (.0666) ;
  \filldraw[fill=orange] (v3) circle (.1);
   \end{tikzpicture}
   -
    \begin{tikzpicture}[scale=1]
  \coordinate  (v1) at (-1,0); 
   \coordinate  (v2) at (0,0);
   \coordinate (v3) at (1,0); 
    \draw (v1) -- (v2) ;
   \draw (v2) -- (v3) ;   
  \filldraw[fill=orange] (v1) circle (.1);
  \filldraw[fill=black] (v2) circle (.0666) ;
  \filldraw[fill=white] (v3) circle (.1);
   \end{tikzpicture}, 
   \\
    \delta d  \
\begin{tikzpicture}[scale=1]
  \coordinate  (v1) at (-1,0); 
   \coordinate  (v2) at (0,0);
   \coordinate (v3) at (1,0); 
   \draw (v1) -- (v2);
   \draw (v2) -- (v3);
   \filldraw[fill=orange] (v1) circle (0.1);
  \filldraw[fill=black] (v2) circle (.0666) ;
  \filldraw[fill=orange] (v3) circle (.1);
   \end{tikzpicture}    
    & =
   \begin{tikzpicture}[scale=1]
  \coordinate  (v1) at (-1,0); 
   \coordinate  (v2) at (0,0);
   \coordinate (v3) at (1,0); 
    \draw (v1) -- (v2) ;
   \draw (v2) -- (v3) ;   
   \filldraw[fill=black] (v1) circle (.0666);
  \filldraw[fill=black] (v2) circle (.0666) ;
  \filldraw[fill=white] (v3) circle (.1);
   \end{tikzpicture}
   -
   \begin{tikzpicture}[scale=1]
  \coordinate  (v1) at (-1,0); 
   \coordinate  (v2) at (0,0);
   \coordinate (v3) at (1,0); 
    \draw (v1) -- (v2) ;
   \draw (v2) -- (v3) ;   
   \filldraw[fill=white] (v1) circle (.1);
  \filldraw[fill=black] (v2) circle (.0666) ;
  \filldraw[fill=black] (v3) circle (.0666);
   \end{tikzpicture}
   = - d \delta \ 
   \begin{tikzpicture}[scale=1]
  \coordinate  (v1) at (-1,0); 
   \coordinate  (v2) at (0,0);
   \coordinate (v3) at (1,0); 
   \draw (v1) -- (v2);
   \draw (v2) -- (v3);
   \filldraw[fill=orange] (v1) circle (0.1);
  \filldraw[fill=black] (v2) circle (.0666) ;
  \filldraw[fill=orange] (v3) circle (.1);
   \end{tikzpicture}.
\end{align*}

If on the other hand $m= 
\begin{tikzpicture}[scale=1]
  \coordinate  (v1) at (-1,0); 
   \coordinate  (v2) at (0,0);
   \coordinate (v3) at (1,0); 
   \draw (v1) -- (v2) ;
   \draw (v2) -- (v3) ;
  \filldraw[fill=black] (v1) circle (.0666);
  \filldraw[fill=white] (v2) circle (.1) ;
  \filldraw[fill=black] (v3) circle (.0666);
   \end{tikzpicture}$, then $d m = \delta m = 0$.
\end{eg}

Our goal is to define a deformation of $d$ using the map $\delta$, that is we want $d+\delta$ to be a differential as well.

\begin{prop}\label{prop:differential}
$d^2=\delta^2=0$ and $d\delta + \delta d=0$.
\end{prop}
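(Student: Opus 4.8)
The plan is to exploit the fact that $d$ and $\delta$ are built from exactly the same sign rule: both are sums over the $1$-marked vertices $v\in V_1$, each term carrying the sign $(-1)^{\#\{w\in V_1\mid w<v\}}$, and differing only in whether $v$ is relabelled by $0$ (for $d$) or by $2$ (for $\delta$). In either case $v$ is deleted from $V_1$, and the resulting map is again a marking: the set $V_m$ either loses the vertex $v$ or is unchanged as a set, so its independence is automatically preserved. Consequently all three identities reduce to one and the same order-two cancellation, the one familiar from the simplicial boundary operator; the only thing to verify is a single comparison of signs between the two orders in which a pair of vertices can be processed.

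Concretely, I would write $V_1=\{u_1<\ldots<u_k\}$, so that the sign attached to $u_i$ is $(-1)^{i-1}$, and expand each composite as a double sum over ordered pairs of distinct indices. For $d^2$, the term in which $u_i$ is sent to $0$ first and then $u_j$ is sent to $0$ produces the marking with both $u_i,u_j$ unmarked, which depends only on the unordered pair $\{u_i,u_j\}$. The same marking is produced in the opposite order, so it suffices to compare the two signs. The only subtlety is that once the smaller of the two vertices has left $V_1$, the positional count $\#\{w\in V_1\mid w<v\}$ of the larger one drops by one; carrying this through shows that the orders $(u_i,u_j)$ and $(u_j,u_i)$ contribute signs $(-1)^{i+j-1}$ and $(-1)^{i+j}$, which cancel. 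Hence $d^2=0$, and since $\delta$ obeys the identical sign rule (a $2$-marked vertex also leaves $V_1$, so the counter sees the same unit decrement), the computation for $\delta^2=0$ is verbatim the same.

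For the anticommutator $d\delta+\delta d$ the two operations now act on distinct vertices: $\delta$ turns some $u_a$ into a $2$-marking while $d$ sends some $u_b$ to $0$, and the common final marking depends only on the ordered datum $(u_a,u_b)$, which is produced once by $d\delta$ (apply $\delta$, then $d$) and once by $\delta d$ (apply $d$, then $\delta$). A short case distinction on whether $a<b$ or $a>b$---again tracking the unit drop in the positional count caused by the first-applied operation removing its vertex from $V_1$---shows that $d\delta$ contributes the sign $(-1)^{a+b-1}$ when $a<b$ and $(-1)^{a+b}$ when $a>b$, whereas $\delta d$ contributes exactly the opposite in each case. Summing over all such pairs gives $d\delta+\delta d=0$.

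The main obstacle is purely the sign bookkeeping, and the way to keep it painless is the unifying observation above: because $d$ and $\delta$ share one sign convention and both delete their vertex from $V_1$, one only needs the single two-line computation comparing the two processing orders of a pair, rather than treating the three identities separately. Independence never obstructs the argument, since none of the operations enlarges $V_m$.
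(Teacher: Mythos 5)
Your proposal is correct and follows essentially the same route as the paper: both expand the composites as double sums over ordered pairs of $1$-marked vertices, pair each unordered (or, for the anticommutator, ordered) datum with its two processing orders, and observe that the unit drop in the positional count $\#\{w\in V_1\mid w<v\}$ caused by the first-applied operation makes the two contributions cancel. Your sign bookkeeping, including the observation that a $2$-marking removes its vertex from $V_1$ just as a $0$-relabelling does, matches the paper's computation exactly, merely phrased via positions $u_1<\cdots<u_k$ instead of the paper's set-counting notation.
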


\begin{proof}
The first statement follows by a standard computation,
\begin{align*}  
d dm & = d \sum_{v \in V_1 }(-1)^{\# \{ w \in V_1 \mid w<v \} } m_{v\mapsto 0}  \\
  &=  \sum_{v \in V_1 }(-1)^{\# \{ w \in V_1 \mid w<v \} }\sum_{v' \in V_1 \setdiff \{v\} }(-1)^{ \# \{ w' \in V_1 \setdiff \{v\} \mid w' < v' \}} m_{v,v' \mapsto 0} \\
  &=  \sum_{v \in V_1 }\sum_{v' \in V_1 \setdiff \{v\}} (-1)^{ \# \{ w \in V_1 \mid w < v\} + \# \{ w' \in V_1 \setdiff \{v\} \mid w' < v'\}} m_{v,v' \mapsto 0} \\
  & = \sum_{v,v' \in V_1,v'<v }(-1)^{\# \{ u \in V_1 \mid v'< u < v\}+1 }m_{v,v' \mapsto 0} \\
  & \quad + \sum_{v,v' \in V_1, v'>v } (-1)^{\# \{ u \in V_1 \mid v< u < v' \}}   m_{v,v' \mapsto 0}= 0,
 \end{align*}
  and similarly for $\delta$.
  
The same argument shows that $d$ and $\delta$ anticommute,
\begin{align*}
  d \delta m & = d \sum_{v \in V_1 }(-1)^{\# \{ w \in V_1 \mid w < v\} }  m_{v \mapsto 2} \\
  &= \sum_{v \in V_1} \sum_{ v' \in V_1\setdiff \{v\} }(-1)^{ \# \{ w \in V_1 \mid w < v\}+ \# \{ w' \in V_1 \setdiff \{v\}  \mid w' < v'\}} m_{v \mapsto 2,v' \mapsto 0}  \\
  &=\sum_{v,v' \in V_1, v'<v} (-1)^{ \# \{ u \in V_1 \mid  v' < u < v \} +1} m_{v\mapsto 2,v' \mapsto 0} \\
  & \quad + \sum_{v,v' \in V_1,v'>v }(-1)^{ \# \{ u \in V_1 \mid v < u < v'  \} } m_{v \mapsto 2,v' \mapsto 0} \\
   &= (-1) \Big( \sum_{v,v' \in V_1, v'<v} (-1)^{ \# \{ u \in V_1 \mid  v' < u < v \} } m_{v\mapsto 2,v' \mapsto 0} \\
  & \quad + \sum_{v,v' \in V_1,v'>v }(-1)^{ \# \{ u \in V_1 \mid v < u < v'  \} +1} m_{v \mapsto 2,v' \mapsto 0} \Big) \\
    &= (-1) \Big(  \sum_{v,v' \in V_1,v < v' }(-1)^{ \# \{ u \in V_1 \mid v < u < v'  \} +1}  m_{v\mapsto 2,v' \mapsto 0} \\
  & \quad + \sum_{v \in V_1, v > v'} (-1)^{ \# \{ u \in V_1 \mid  v' < u < v \} } m_{v \mapsto 2,v' \mapsto 0} \Big) \\
  & = -\delta d m.
   \end{align*} 
  \end{proof}

Note that the complex $(T_{\bullet,0},d)$ models the simplicial chain complex of $\mathrm{Ind}(G)$. More precisely, for any choice of order on $V$ and orientation of $\mathrm{Ind}(G)$ there exists a unique isomorphism of chain complexes
\begin{equation}\label{eq:zerocomplex}
(T_{\bullet,0},d) \cong (C_{\bullet-1}(\mathrm{Ind}(G)), \partial)
\end{equation}
where $(C_\bullet(\mathrm{Ind}(G)), \partial)$ denotes the augmented simplicial chain complex of $\mathrm{Ind}(G)$. On the level of chain groups this isomorphism is given by simply mapping every independent set $I$ in $G$ to the marking $m_I$ that marks the vertices in $I$ by 1 and every other vertex by 0. Since an orientation of $\mathrm{Ind}(G)$ is the same as a linear order on its vertex set, which is equal to $V$, this correspondence defines a chain map. Thus, $H_n(T_{\bullet,0},d)$ is isomorphic to the reduced simplicial homology $\tilde H_{n-1}(\mathrm{Ind}(G))$ of the independence complex of $G$.
\newline

What about the complexes with 2-marked vertices, i.e.\ the case $j\neq 0$? In this case we can relate the complex $(T_{\bullet,j},d)$ to independence complexes of graphs obtained from $G$ by removing $j$ vertices together with their neighborhoods.

\begin{defn}
The \textit{neighborhood} of a vertex $v$ in a graph $G$ is $N(v):= \{ w \in V \mid  \{v,w\} \in E  \}$, the set of vertices of $G$ adjacent to $v$. The \textit{closed neighborhood} of $v$ is $N[v]:= \{v\} \cup N(v)$. Likewise, for a subset $U \subset V$ we define 
\begin{equation*}
N(U):= \bigcup_{v \in U} N(v) \ \text{ and } \ N[U]:= \bigcup_{v \in U} N[v]
\end{equation*}

For any subset $W\subset V$ we write $G - W$ for  the graph obtained from $G$ by deleting all elements in $W$, i.e.\ for the graph $G- W:=(V',E')$ with
\begin{equation*}
V'=V \setdiff W, \quad E'= \{ e=\{x,y\} \in E \mid x,y \in V'  \},
\end{equation*}
also known as the \textit{induced subgraph} $G[V-W]$.
\end{defn}

\begin{prop}\label{prop:dcomplex}
For each $j\geq 0$ there is an isomorphism of chain complexes, 
\begin{equation}\label{eq:dcomplex}
(T_{\bullet, j}(G),d) \cong \bigoplus_{ \substack{ U\subset V \text{ independent}  \\ |U|=j }  } ( C_{\bullet-1}(G \mi N[U]),\partial ),
\end{equation}
$(C_{\bullet-1}(G\mi N[U]),\partial)$ denoting the augmented (and degree shifted) simplicial chain complex of $\I {G\mi N[U]}$ with the convention 
\begin{equation*}
(C_{\bullet}(\emptyset),\partial) := 0 \overset{\partial_0}{\longrightarrow} \mb Z \overset{\partial_{-1}}{\longrightarrow} 0.
\end{equation*}
\end{prop}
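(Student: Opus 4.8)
The plan is to exploit the fact that the differential $d$ never touches $2$-marked vertices: by Definition \ref{defn:T} it only relabels $1$-marked vertices to $0$, so the set $V_2=m^{-1}(2)$ is identical for $m$ and for every summand of $dm$. Consequently, grouping the generators of $T_{\bullet,j}(G)$ according to their $2$-marked set $U:=V_2$ yields a direct sum decomposition of chain complexes
\[
(T_{\bullet,j}(G),d) \;\cong\; \bigoplus_{\substack{U\subset V\text{ independent}\\ |U|=j}} (T^U_{\bullet},d),
\]
where $T^U_\bullet$ is spanned by the markings with $V_2=U$ (note such a $U$ is necessarily independent, being a subset of the independent set $V_m$). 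The first step is thus merely to record that $d$ respects this splitting, which is immediate from the sign formula in Definition \ref{defn:T}.

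Second, I would identify each summand $T^U_\bullet$ with $(T_{\bullet,0}(G\mi N[U]),d)$ up to a degree shift by $j$. The key observation is a bijection: a marking $m$ with $V_2=U$ is determined by its $1$-marked set $V_1$, and since $V_m=V_1\cupdot U$ must be independent in $G$, the set $V_1$ is exactly an arbitrary independent set of $G$ contained in $V\setdiff N[U]$ — equivalently, an independent set of the induced subgraph $G\mi N[U]$. One checks both inclusions: if $V_m$ is independent then no vertex of $V_1$ equals or is adjacent to a vertex of $U$, so $V_1\cap N[U]=\emptyset$; conversely any independent $V_1\subset V\setdiff N[U]$ has no edges to $U$, whence $V_1\cupdot U$ is again independent. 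Equipping $V\setdiff N[U]$ with the order induced from $V$ and sending such an $m$ to the corresponding marking of $G\mi N[U]$ gives an isomorphism on chain groups $T^U_i\cong T_{i-j,0}(G\mi N[U])$.

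Third — and this is the only point requiring a little care — I would verify that this bijection is a chain map. The signs in $dm$ count only $1$-marked vertices below the relabeled vertex; since all $1$-marked vertices lie in $V_1\subset V\setdiff N[U]$ while the $2$-marked vertices of $U$ contribute nothing to these counts, the sign attached to removing a given $v\in V_1$ is computed identically whether one works in $G$ with $<$ restricted to $V_1$ or in $G\mi N[U]$ with the induced order. Hence the two differentials agree under the identification, giving $T^U_\bullet\cong (T_{\bullet-j,0}(G\mi N[U]),d)$ as chain complexes.

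Finally, applying the isomorphism \eqref{eq:zerocomplex} to the graph $G\mi N[U]$ turns $(T_{\bullet,0}(G\mi N[U]),d)$ into the augmented simplicial chain complex $(C_{\bullet-1}(\I{G\mi N[U]}),\partial)$; together with the shift by $j$ this produces the degree-shifted complex on the right-hand side of \eqref{eq:dcomplex}, and summing over all independent $U$ with $|U|=j$ yields the claim. The one degenerate case to dispatch separately is when $N[U]=V$ (for instance $U$ a maximal, hence dominating, independent set): then $G\mi N[U]=\emptyset$, the only marking with $V_2=U$ is $m=m_U$ itself, and the single generator $\mb Z$ it spans is matched with the degree $-1$ term in the convention $(C_\bullet(\emptyset),\partial)$. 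I expect the main (very mild) obstacle to be exactly this sign-and-degree bookkeeping, together with the correct handling of the empty-graph and augmentation conventions; there is no deeper difficulty, since the whole statement rests on the single structural fact that $d$ is blind to $2$-markings.
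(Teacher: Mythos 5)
Your proposal is correct and takes essentially the same route as the paper's own proof: both split $(T_{\bullet,j}(G),d)$ into summands indexed by the $2$-marked independent set $U$ (using that $d$ ignores $2$-markings), identify the markings with $V_2=U$ with independent sets of $G \mi N[U]$, reduce to the case $j=0$ via the isomorphism \eqref{eq:zerocomplex} applied to $G\mi N[U]$, and handle $G \mi N[U]=\emptyset$ by the stated convention. Your additional verifications (the two inclusions in the bijection, the sign computation under the induced order, and the shift by $j$) merely make explicit the bookkeeping the paper leaves implicit.
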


\begin{proof}
The case $j=0$ has been discussed above, so let $j>0$. By definition every set of 2-marked vertices forms an independent set in $G$. Conversely, every independent set of size $j$ can be modeled by an appropriate 2-marking. Since $d$ acts only on 1-marked vertices, the complex $(T_{\bullet,j},d)$ splits into a direct sum of complexes with one summand for each independent/2-marked set of size $j$. 

Given such an independent set $U$, the remaining vertices that can be marked are precisely the non-neighbors of vertices in $U$, i.e.\ the vertices of the graph $G\mi N[U]$. If $G\mi N[U]$ is not empty, then \eqref{eq:dcomplex} follows from the interpretation \eqref{eq:zerocomplex}.

In the special case that $U$ is a maximal independent set there is no possibility to put any 1-markings on the remaining vertices, so $G\mi N[U]=\emptyset$. The corresponding chain complex has only one nontrivial chain group in degree zero, generated by a single element, the marking that marks every vertex in $U$ by 2.
\end{proof}

In terms of chain complexes of markings $\left(T(\cdot),d\right)$ we may rephrase the previous proposition. It states that 
\begin{equation} \label{eq:dcomplex2}
(T_{\bullet, j}(G),d) \cong \bigoplus_{ \substack{ U\subset V \text{ independent}  \\ |U|=j }  } ( T_{\bullet-j,0}(G\mi N[U]),d ).
\end{equation}

This leads to two important observations. 

Firstly, even in the presence of 2-marked vertices the complexes $(T_{\bullet,j},d)$ can be interpreted as chain complexes of independence complexes of graphs (more precisely, of subgraphs of $G$). 

Secondly, as the number $j$ of 2-markings increases, the complexes $(T_{\bullet, j},d)$ split into simpler and simpler building blocks. 

We will see that these two observations -- together with the simplicity of the second differential $\delta$ -- allow us to set up a spectral sequence to study the homology of $(T_{\bullet, 0},d)$, and thus $H_\bullet(\mathrm{Ind}(G))$.

\begin{rem}\label{rem:higher}
The whole construction outlined in this paper works also in the case of higher independence complexes (as well as for more general markings where a given set of subgraphs is allowed to be marked, cf.\ \cite{mb-ak}).
One simply requires in Definition \ref{defn:marking} the set $V_m$ of marked elements to be $r$-independent. Then everything works exactly as presented here for the case $r=1$, except for one crucial difference. The splitting of $(T_{\bullet, j},d)$ in \eqref{eq:dcomplex} or \eqref{eq:dcomplex2} becomes more complicated: The direct sum runs now over $r$-independent sets in $G$ and the appropriate replacements of the graphs $G\mi N[U]$ are not necessarily subgraphs of $G$ anymore due to the varying cardinality of $r$-independent sets. Only for $r=2$ a similar looking formula can be recovered where the summands in \eqref{eq:dcomplex2} have to be replaced by 1-independence complexes of appropriately associated graphs. See Example \ref{eg:2ind} and \ref{eg:2ind2}.
\end{rem}

\section{The second differential $\delta$}\label{sec:delta}

While $d$ models the boundary map of independence complexes, the map $\delta: T_{i,j} \to T_{i,j+1}$ acts by relabeling already marked vertices. Thus, it is effectively independent of the topology of $G$. 
However, this differential may also be interpreted as the (co-)boundary map of a simplicial complex, albeit of a very simple one, the standard simplex $\Delta^{i-1}$ on $i$ vertices.

\begin{rem}
Note that $\delta$ has bidegree $(0,1)$,  going in the ``wrong" direction. Nevertheless, we will use homological terminology for both maps, $d$ and $\delta$. This avoids awkwardly changing between homology and cohomology. For the purists this choice of convention may be justified by flipping the sign in the second part of the grading of $T_{i,j}$, i.e.\ by defining 
\begin{equation*}
T_{i,j}:= \mb Z \big\langle m:V \to \{0,1,2\} \mid |V_m|=i, |V_2|=-j  \big\rangle.
\end{equation*}
\end{rem}

\begin{prop}\label{prop:deltahomtrivial}
All homology groups $H_n( T_{i,\bullet} , \delta )$ are trivial unless $i=0$ and $n=0$. In this case $H_0( T_{0,\bullet} , \delta )\cong \mb Z$, generated by the trivial marking 
\begin{equation}
m_0:V \longrightarrow \{0,1,2\}, \ m_0(v) = 0   \text{ for all } v\in V.
\end{equation}
\end{prop}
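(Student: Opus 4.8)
The plan is to fix the number $i$ of total markings and study the complex $(T_{i,\bullet},\delta)$ in isolation, exploiting the fact that $\delta$ ignores the topology of $G$ entirely. The key observation is that $\delta$ only relabels $1$-marked vertices into $2$-marked vertices and never touches which vertices are marked; hence the underlying marked set $V_m$ (an independent set of size $i$) is preserved by $\delta$. Therefore $(T_{i,\bullet},\delta)$ splits as a direct sum over all independent sets $S\subset V$ with $|S|=i$, where the summand indexed by $S$ is generated by all markings whose marked set is exactly $S$, i.e.\ by all ways of coloring the $i$ vertices of $S$ with labels in $\{1,2\}$. Since homology commutes with finite direct sums, it suffices to compute the homology of each summand separately.

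Next I would identify each summand with the (reduced) cochain complex of a standard simplex, as the remark preceding the proposition suggests. Fixing $S=\{u_1<\dots<u_i\}$, a marking supported on $S$ is determined by its set of $2$-marked vertices $V_2\subseteq S$, so the chain group in $\delta$-degree $j$ is freely generated by the $j$-element subsets of $S$, and $\delta$ sends a subset to the signed sum of the subsets obtained by adjoining one more element (with the sign $(-1)^{\#\{w\in V_1\mid w<v\}}$, which is precisely the standard simplicial coboundary sign once one accounts for the relabeling). This is exactly the augmented simplicial cochain complex of the full simplex $\Delta^{i-1}$ on the vertex set $S$, whose (reduced) cohomology is well known to vanish in every degree because $\Delta^{i-1}$ is contractible. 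For $i\geq 1$ this yields $H_n(T_{i,\bullet},\delta)=0$ for all $n$. For $i=0$ the only marking is the trivial marking $m_0$, there is nothing to relabel, and the complex is concentrated in bidegree $(0,0)$ with chain group $\mb Z$, giving $H_0(T_{0,\bullet},\delta)\cong\mb Z$ as claimed.

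I expect the main obstacle to be purely bookkeeping rather than conceptual: one must check carefully that the signs in Definition \ref{defn:T}, which are defined relative to the global set $V_1$ of $1$-marked vertices (a set that shrinks as $\delta$ acts), really do assemble into the correct simplicial coboundary signs on each fixed simplex $\Delta^{i-1}$. Concretely, when $\delta$ converts $v$ from $1$ to $2$, the sign depends on how many currently $1$-marked vertices precede $v$, and one has to verify this matches the usual $(-1)^{(\text{position})}$ rule for the coboundary after identifying $V_1$ with the face being expanded. An alternative that sidesteps the identification with $\Delta^{i-1}$ is to exhibit an explicit contracting homotopy on each nontrivial summand: for $i\geq 1$ pick the smallest marked vertex $u_1\in S$ and let $h$ be the operator that relabels $u_1$ from $2$ back to $1$ (and kills markings where $u_1$ is already $1$-marked); a direct check that $\delta h + h\delta = \mathrm{id}$ on each summand with $i\geq 1$ then proves acyclicity without any appeal to the geometry of the simplex. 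Either route reduces to a finite sign computation of exactly the type already carried out in Proposition \ref{prop:differential}, so I would present the homotopy argument as the cleanest option and relegate the sign verification to a one-line remark.
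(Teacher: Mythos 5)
Your proposal is correct, and its first half is essentially the paper's argument: the paper likewise splits $(T_{i,\bullet},\delta)$ as a direct sum over independent sets $U$ of size $i$ (since $\delta$ never changes the marked set) and identifies each summand with an augmented, degree-shifted complex of the standard simplex $\Delta^{i-1}$, then invokes contractibility; the $i=0$ case is handled by noting $m_0\in\ker\delta$ and $m_0\notin\mathrm{im}\,\delta$ because $\delta$ preserves the number of marked vertices. The one difference in the identification step is that you index generators by $V_2=m^{-1}(2)$, turning $\delta$ into a \emph{coboundary} (adjoining elements), whereas the paper indexes by $V_1=m^{-1}(1)$, sending $m$ to the oriented simplex $m^{-1}(1)\subset U$, so that $\delta$ becomes the standard simplicial \emph{boundary} (deleting a vertex). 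This choice dissolves the sign bookkeeping you flag as the main obstacle: the sign $(-1)^{\#\{w\in V_1\mid w<v\}}$ is verbatim the standard boundary sign of the face map removing $v$ from the ordered simplex $V_1$, so no verification beyond the observation itself is needed; with your $V_2$-indexing the signs are nonstandard relative to $V_2$ and genuinely require the check (or a diagonal change of basis). Your preferred alternative, the contracting homotopy $h$ relabeling the smallest marked vertex $u_1\in S$ from $2$ to $1$ and annihilating markings with $m(u_1)=1$, is a genuinely different finishing move from the paper's appeal to contractibility, and it does work: if $m(u_1)=1$ then $\delta hm=0$ and $h\delta m=m$ (only the $v=u_1$ term of $\delta m$, which carries sign $+1$ as $u_1$ is minimal, survives $h$), while if $m(u_1)=2$ the cross terms in $\delta hm$ and $h\delta m$ cancel because reinstating $u_1$ as $1$-marked shifts every other sign $(-1)^{\#\{w\in V_1\mid w<v\}}$ by exactly one, leaving $(\delta h+h\delta)m=m$. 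What the homotopy buys is a self-contained algebraic proof with no identification of complexes at all; what the paper's route buys is conceptual transparency (the summands \emph{are} simplices, which also explains the remark preceding the proposition) at the cost of no extra computation, since the $V_1$-indexing makes the chain isomorphism immediate.
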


The proof of this proposition is not needed in the sequel and may be omitted on a first read; the following example should give an intuitive idea why the statement holds. 

\begin{eg} 
Let $G=P_3$ as in Example \ref{eg:firstexample}. For $i=2$ we have 
   \begin{align*}
   \delta \
\begin{tikzpicture}[scale=1]
  \coordinate  (v1) at (-1,0); 
   \coordinate  (v2) at (0,0);
   \coordinate (v3) at (1,0); 
   \draw (v1) -- (v2);
   \draw (v2) -- (v3);
   \filldraw[fill=orange] (v1) circle (0.1);
  \filldraw[fill=black] (v2) circle (.0666) ;
  \filldraw[fill=orange] (v3) circle (.1);
   \end{tikzpicture} 
   & = 
   \begin{tikzpicture}[scale=1]
  \coordinate  (v1) at (-1,0); 
   \coordinate  (v2) at (0,0);
   \coordinate (v3) at (1,0); 
   \draw (v1) -- (v2);
   \draw (v2) -- (v3);
   \filldraw[fill=white] (v1) circle (0.1);
  \filldraw[fill=black] (v2) circle (.0666) ;
  \filldraw[fill=orange] (v3) circle (.1);
   \end{tikzpicture}
   -
   \begin{tikzpicture}[scale=1]
  \coordinate  (v1) at (-1,0); 
   \coordinate  (v2) at (0,0);
   \coordinate (v3) at (1,0); 
   \draw (v1) -- (v2);
   \draw (v2) -- (v3);
   \filldraw[fill=orange] (v1) circle (0.1);
  \filldraw[fill=black] (v2) circle (.0666) ;
  \filldraw[fill=white] (v3) circle (.1);
   \end{tikzpicture}
 ,   \\
  \delta \ 
  \begin{tikzpicture}[scale=1]
  \coordinate  (v1) at (-1,0); 
   \coordinate  (v2) at (0,0);
   \coordinate (v3) at (1,0); 
   \draw (v1) -- (v2);
   \draw (v2) -- (v3);
   \filldraw[fill=white] (v1) circle (0.1);
  \filldraw[fill=black] (v2) circle (.0666) ;
  \filldraw[fill=orange] (v3) circle (.1);
   \end{tikzpicture}
   & = 
   \begin{tikzpicture}[scale=1]
  \coordinate  (v1) at (-1,0); 
   \coordinate  (v2) at (0,0);
   \coordinate (v3) at (1,0); 
   \draw (v1) -- (v2);
   \draw (v2) -- (v3);
   \filldraw[fill=white] (v1) circle (0.1);
  \filldraw[fill=black] (v2) circle (.0666) ;
  \filldraw[fill=white] (v3) circle (.1);
   \end{tikzpicture}
   = 
   \delta \
   \begin{tikzpicture}[scale=1]
  \coordinate  (v1) at (-1,0); 
   \coordinate  (v2) at (0,0);
   \coordinate (v3) at (1,0); 
   \draw (v1) -- (v2);
   \draw (v2) -- (v3);
   \filldraw[fill=orange] (v1) circle (0.1);
  \filldraw[fill=black] (v2) circle (.0666) ;
  \filldraw[fill=white] (v3) circle (.1);
   \end{tikzpicture}
   , \\
   \delta \
   \begin{tikzpicture}[scale=1]
  \coordinate  (v1) at (-1,0); 
   \coordinate  (v2) at (0,0);
   \coordinate (v3) at (1,0); 
   \draw (v1) -- (v2);
   \draw (v2) -- (v3);
   \filldraw[fill=white] (v1) circle (0.1);
  \filldraw[fill=black] (v2) circle (.0666) ;
  \filldraw[fill=white] (v3) circle (.1);
   \end{tikzpicture} 
   & = 0.
   \end{align*}
\end{eg}

A formal proof of Proposition \ref{prop:deltahomtrivial} relies on the following two lemmata.

\begin{lem}
For $k>1$ let $(C_\bullet(k),\partial)$ denote the augmented and degree shifted simplicial chain complex of the standard simplex on $k$ vertices, $(C_\bullet(k),\partial):=(C_{\bullet-1}(\Delta^{k-1}),\partial)$.
Then for $i>0$
\begin{equation*}
 (T_{i,\bullet},\delta) \cong \bigoplus_{ \substack{U \subset V \text{independent} \\ |U | =i} } (C_\bullet(i),\partial).
\end{equation*}
\end{lem}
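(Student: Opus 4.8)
The plan is to prove the lemma one summand at a time, after first splitting the complex according to which vertices carry a marking. First I would observe that the differential $\delta$ never alters the set $V_m$ of marked vertices: it only upgrades a $1$-marked vertex to a $2$-marked one. Consequently, for every independent set $U\subset V$ with $|U|=i$, the subgroup $T^U_{i,\bullet}\subset T_{i,\bullet}$ spanned by the markings $m$ with $V_m=U$ is a subcomplex, and
\begin{equation*}
(T_{i,\bullet},\delta)=\bigoplus_{\substack{U\subset V\text{ independent}\\ |U|=i}}(T^U_{i,\bullet},\delta).
\end{equation*}
Since the indexing sets of this splitting and of the asserted direct sum already agree, it remains to identify each summand $(T^U_{i,\bullet},\delta)$ with $(C_\bullet(i),\partial)$.

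Fixing such a $U$, I would regard $\Delta^{i-1}$ as the full simplex on the vertex set $U$, ordered by the restriction of the chosen order on $V$. A marking $m$ with $V_m=U$ is determined by its set $V_1=V_1(m)=U\setdiff V_2$ of $1$-marked vertices, and the assignment $m\mapsto V_1(m)$ is a bijection between the generators of $T^U_{i,\bullet}$ and the faces of $\Delta^{i-1}$, including the empty face. The empty face corresponds to the unique all-$2$-marked marking of $U$ and thereby accounts for the augmentation term; this matches the treatment of maximal independent sets in Proposition \ref{prop:dcomplex}. A marking with $|V_2|=n$ has $|V_1|=i\mi n$, so $V_1(m)$ is an $(i\mi n\mi 1)$-face and hence a basis element of $C_{i-n}(i)$; this is where the grading reversal (equivalently, the shift recorded in the sign-flip Remark, under which $\delta$ genuinely lowers degree) enters.

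The key step is that this bijection intertwines $\delta$ with $\partial$. I would check this directly: $\delta$ replaces the face $V_1$ by $V_1\setdiff\{v\}$ for each $v\in V_1$, weighted by $(-1)^{\#\{w\in V_1\mid w<v\}}$. Writing $V_1=\{u_0<\cdots<u_{s-1}\}$ with $s=i\mi n$, the exponent $\#\{w\in V_1\mid w<v\}$ is exactly the position of $v$ in this ordered face, so it reproduces the sign of $V_1\setdiff\{v\}$ in the simplicial boundary $\partial V_1=\sum_t(-1)^t(V_1\setdiff\{u_t\})$. Therefore $m\mapsto V_1(m)$ is an isomorphism of complexes from $(T^U_{i,\bullet},\delta)$ onto $(C_\bullet(i),\partial)$, and summing over $U$ yields the lemma.

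I do not expect any genuine obstacle here: the splitting is immediate and the sign bookkeeping closes up on the nose precisely because $\delta$ is defined with the positional sign of the simplicial boundary. The only point demanding care is the choice of convention, namely that $\delta$ \emph{raises} the number of $2$-markings while $\partial$ \emph{lowers} simplicial degree. This is why the natural identification runs through the complementary set $V_1$ rather than $V_2$, and why the grading must be read in accordance with the sign-flip convention of the Remark.
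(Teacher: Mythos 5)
Your proposal is correct and follows essentially the same route as the paper: the same splitting of $(T_{i,\bullet},\delta)$ over independent sets $U$ with $|U|=i$, and the same identification of each summand with the augmented chain complex of the full simplex on $U$ via $m\mapsto m^{-1}(1)$. You merely spell out what the paper leaves as ``clearly a chain map'' -- the positional-sign check, the all-$2$ marking as the augmentation term, and the grading reversal $n=i-j$ -- which is sound but not a different argument.
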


\begin{proof}
Fix $i>0$. The map $\delta$ neither creates any new marked vertices nor does it see adjacency relations in $G$; it simply operates on the set of marked vertices, regardless of their distribution in $G$. Therefore, the complex $(T_{i,\bullet},\delta)$ splits into a direct sum
\begin{equation*}
(T_{i,\bullet},\delta) \cong \bigoplus_{ \substack{U \subset V \text{independent} \\ |U | =i} } (T^{f}(U)_\bullet,\delta),
\end{equation*}
where 
\begin{equation*}
T^{f}(U):=\mb Z \langle m:U\to \{1,2\} \rangle
\end{equation*}
denotes the free abelian group generated by all ``full" markings of the graph $U$ on $i=|U|$ disjoint vertices, graded by the number of 1-marked elements. 

Identifying the vertices in $U$ with the vertices of $\Delta^{i-1}$, there is a unique orientation preserving bijection between the elements $m \in T^{f}(U)$ and the simplices in $\Delta^{i-1}$, sending $m$ to the (oriented) simplex $m^{-1}(1) \subset U$. This is clearly a chain map (after shifting the degree by one), so that
\begin{equation*}
(T^{f}(U)_\bullet,\delta) \cong (C_{\bullet-1}(\Delta^{i-1}),\partial)
\end{equation*}
and the claim follows.
\end{proof}

\begin{lem}
For each $i>0$ and all $n\in \mb N$ the groups $H_n ( C_\bullet(i),\partial \big)$ are trivial.
\end{lem}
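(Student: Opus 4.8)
The assertion is that the augmented, degree-shifted chain complex $(C_\bullet(i),\partial)=(C_{\bullet-1}(\Delta^{i-1}),\partial)$ of the full simplex on $i$ vertices is acyclic for every $i>0$. Conceptually this is nothing more than the contractibility of a simplex: the augmented complex computes reduced simplicial homology, and $\tilde H_\bullet(\Delta^{i-1})=0$ because $\Delta^{i-1}$ is a cone. My plan, however, is to keep the argument self-contained by writing down an explicit chain contraction, which also fits the purely combinatorial spirit of the preceding constructions.

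First I would fix the vertex set of $\Delta^{i-1}$ to be the independent set $U=\{v_1,\ldots,v_i\}$ and single out the vertex $v_1$. I would then define a degree-raising operator $h$ (a cone operator) on the whole augmented complex as follows: on the generator of the augmentation term $\mb Z=C_0(i)$, set $h(1)=[v_1]$; on an oriented face $\sigma$ with $v_1\notin\sigma$, set $h(\sigma)=[v_1,\sigma]$, the face obtained by prepending $v_1$; and on a face with $v_1\in\sigma$, set $h(\sigma)=0$.

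The core of the proof is then to check the chain-homotopy identity $\partial h+h\partial=\mathrm{id}$ on every generator, which splits into two cases. For $v_1\notin\sigma$ one expands $\partial h(\sigma)=\partial[v_1,\sigma]=\sigma-[v_1,\partial\sigma]$ and uses $h(\partial\sigma)=[v_1,\partial\sigma]$, so the two correction terms cancel and the identity reduces to $\sigma$. For $v_1\in\sigma$, writing $\sigma=[v_1,\tau]$ one has $h(\sigma)=0$ while $h(\partial\sigma)=\sigma$, again yielding $\mathrm{id}$; the bottom of the complex is handled by the special cases $h(1)=[v_1]$ and $\partial[v_1]=1$. Granting this identity, any cycle $z$ satisfies $z=\partial h(z)+h\partial(z)=\partial(h z)$, so every cycle is a boundary and all homology groups vanish.

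The only point requiring care — the main obstacle, such as it is — is the sign and orientation bookkeeping under the degree shift, together with the boundary behavior at the augmentation term, where one must confirm that $h$ splits $\epsilon:C_0(\Delta^{i-1})\to\mb Z$ compatibly with the identity. These checks are routine but worth writing out explicitly. Should one prefer brevity, the entire lemma collapses to the observation that $\Delta^{i-1}$ is a nonempty cone, hence contractible, hence has trivial reduced homology.
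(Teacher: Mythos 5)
Your proof is correct, and it takes a more self-contained route than the paper, which disposes of the lemma in one sentence: ``A simplex is contractible, so its reduced homology vanishes.'' In other words, the paper simply cites the topological fact that $\Delta^{i-1}$ is a cone and that the augmented complex computes reduced homology, whereas you make this explicit at the chain level by exhibiting the cone operator $h$ with $\partial h + h\partial = \mathrm{id}$ on the augmented complex (including the augmentation term, where $h(1)=[v_1]$ and $\partial[v_1]=1$ close the diagram at the bottom). Your case analysis is sound: for $v_1\notin\sigma$ the identity $\partial[v_1,\sigma]=\sigma-[v_1,\partial\sigma]$ cancels against $h(\partial\sigma)=[v_1,\partial\sigma]$ (legitimate because no face of $\sigma$ contains $v_1$), and for $\sigma=[v_1,\tau]$ the only surviving term of $h(\partial\sigma)$ is $h(\tau)=\sigma$, since every face of $[v_1,\partial\tau]$ contains $v_1$ and is killed by $h$. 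What your version buys is independence from any imported topology and a verification that works verbatim with integer coefficients and under the degree shift $(C_\bullet(i),\partial)=(C_{\bullet-1}(\Delta^{k-1}),\partial)$ used in the paper; what the paper's version buys is brevity, which is reasonable given that the lemma is standard and its proof is explicitly flagged as omittable on a first read. Your closing remark shows you see both options; either is acceptable here.
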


\begin{proof}
A simplex is contractible, so its reduced homology vanishes.
\end{proof}

\begin{proof}[Proof of Proposition \ref{prop:deltahomtrivial}]
Combining the two lemmata shows the first assertion in the proposition, the second one follows by direct computation: Clearly, $m_0 \in \ker \delta$, and since $\delta$ keeps the number of marked elements constant, $m_0$ cannot be an element of $\mathrm{im} \ \! \delta$.
\end{proof} 

Our next task is to study the differential $d+\delta$, viewed as a deformation of $d$, and to use it to extract information about the differential $d$, especially when restricted to the subcomplex of markings with no 2-marked vertices.

\section{The total complex and its homology}\label{sec:doublecomplex}

To study the homology of $(T_{\bullet, 0},d)$ we first consider the total complex associated to $T_{i,j}$ with $d$ as horizontal and $\delta$ as vertical differential.  
For this let
\begin{equation*}
  T:= \bigoplus_n T_n, \quad T_n:= \bigoplus_{i-j=n} T_{i,j}.
\end{equation*}

Note that the total grading is given by the number of 1-marked vertices. Define a differential on $T$ by
\begin{equation*}
D_n: T_n \longrightarrow T_{n-1}, \ D_n:=d+\delta .
\end{equation*}

Proposition \ref{prop:differential} implies that $(T,D)$ is a chain complex. Its homology is given by

\begin{thm}\label{thm:acyclic}
The complex $(T,D)$ is acyclic,
\begin{equation*}
H_n(T,D)\cong\begin{cases}
\mb Z & n=0, \\
0 & n \neq 0.
\end{cases}
\end{equation*}
\end{thm}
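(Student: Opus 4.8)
The plan is to read off the homology of $(T,D)$ from the spectral sequence of the double complex $T_{i,j}$, filtered in the direction that extracts the vertical differential $\delta$ first. This lets me feed in Proposition \ref{prop:deltahomtrivial} essentially verbatim, rather than building an explicit contracting homotopy by hand. The first thing to observe is that, since $G$ is finite, only finitely many groups $T_{i,j}$ are nonzero (indeed $0\le j\le i\le |V|$). Hence the double complex is bounded, and the associated spectral sequence converges to $H_\bullet(T,D)$ with no convergence issues to worry about.

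Next I would filter $T$ by the number of marked vertices, setting
\begin{equation*}
F_p T_n := \bigoplus_{i\le p} T_{i,\, i-n}.
\end{equation*}
This is an exhaustive, bounded filtration compatible with $D$. The differential induced on the associated graded is precisely the part of $D=d+\delta$ that preserves the index $i$, namely $\delta$; the horizontal part $d$ strictly lowers $i$ and hence strictly decreases the filtration degree. Therefore $E^0$ consists exactly of the columns $(T_{i,\bullet},\delta)$, and
\begin{equation*}
E^1_{i,\bullet} = H_\bullet\big(T_{i,\bullet},\delta\big).
\end{equation*}

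By Proposition \ref{prop:deltahomtrivial} this first page is concentrated in a single spot: $H_\bullet(T_{i,\bullet},\delta)=0$ unless $i=0$, and $H_0(T_{0,\bullet},\delta)\cong\mb Z$, generated by the trivial marking $m_0\in T_{0,0}$. Since $E^1$ has exactly one nonzero entry, every higher differential must vanish for degree reasons -- there is nothing to map into or out of that entry -- so the sequence collapses and $E^1=E^\infty$. The surviving class sits at $(i,j)=(0,0)$, i.e.\ in total degree $n=i-j=0$. Convergence of the (bounded) filtration then gives $H_0(T,D)\cong\mb Z$ and $H_n(T,D)=0$ for $n\neq 0$, which is the assertion of the theorem.

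The only genuinely delicate point is the bookkeeping of gradings and filtration directions: because $\delta$ has bidegree $(0,1)$ and so ``goes the wrong way,'' I must check that the chosen filtration really produces $\delta$ (and not $d$) as the $E^0$-differential, and that the unique surviving class indeed lands in total degree $0$ under $n=i-j$. Once Proposition \ref{prop:deltahomtrivial} is invoked, the remainder is formal homological algebra, and no further computation with the explicit signs in Definition \ref{defn:T} is required.
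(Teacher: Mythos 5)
Your proposal is correct and matches the paper's own proof essentially verbatim: the paper also filters by the number of marked vertices, via $F_pT_n=\bigoplus_{i-j=n,\,i\le p}T_{i,j}$, identifies the $E^0$-differential with $\delta$, invokes Proposition \ref{prop:deltahomtrivial} to concentrate $E^1$ in the single entry $E^1_{0,0}\cong\mb Z$, and concludes by collapse and convergence of the bounded filtration. Your explicit remarks on boundedness and on checking that $\delta$, not $d$, survives to the associated graded are sound and consistent with the paper's conventions.
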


\begin{proof}
Consider an ascending exhaustive filtration 
\begin{equation*}
 0=F_{-1}T \subset \ldots \subset F_{p-1}T \subset F_pT \subset \ldots \subset T, 
\end{equation*}
defined by
\begin{equation}\label{eq:filtration}
F_pT_n:= \bigoplus_{ \substack{ i-j=n, \\ i\leq p}} T_{i,j}.
\end{equation}

It induces an associated spectral sequence which starts with
\begin{align*}
& E^0_{p,q}:= F_p T_{p-q} / F_{p-1} T_{p-q} = \bigoplus_{ \substack{ i-j=p-q, \\ i = p}} T_{i,j} = T_{p,q} ,
\\
& d^0_{p,q} :  E^0_{p,q} \longrightarrow E^0_{p,q+1} = \delta : T_{p,q} \longrightarrow T_{p,q+1}.
\end{align*}

On its first page we have $E^1_{p,q}=H_q(T_{p,\bullet},\delta)$ which by Proposition \ref{prop:deltahomtrivial} vanishes for $(p,q)\neq (0,0)$, while for $p=q=0$ we have $H_0(T_{0,\bullet},\delta) \cong \mb Z$. 

It follows that all differentials on the next page 
\begin{equation*}
d^1_{p,q}: E^1_{p,q} \longrightarrow E^1_{p-1,q}
\end{equation*} 
are trivial and the sequence collapses with $E^\infty=E^1$. 

By standard spectral sequence arguments\footnote{We follow the conventions in \cite{gm-homalg}. For nice introductions see \cite{chow-specseq} as well as \cite{ramos-specseq}, and \cite{ug-specseq} for a concise treatment of the subject.} we can reconstruct from $E^\infty$ the (associated graded pieces of the) homology of $(T,D)$. In the present case this is simple, 
\begin{equation*}
H_0(T,D)\cong E^1_{0,0} \cong \mb Z, \quad H_n(T,D) \cong 0 \text{ for all }n>0.
\end{equation*} 
\end{proof}

In the next section we will study a spectral sequence associated to the other canonical filtration of $T$, obtained by filtering in the horizontal direction. This spectral sequence converges to the same limit. Furthermore, its first page will be populated by the homology groups of $\mathrm{Ind}(G)$ and of the independence complexes of the graphs $G\mi N[U]$ for $U\subset V$ independent.

\section{The homology of $\mathrm{Ind}(G)$}\label{sec:homology}

We now turn our attention to the homology groups $H_n(T_{\bullet,0},d)\cong \tilde{H}_{n-1}(\mathrm{Ind}(G))$. The proof of Theorem \ref{thm:acyclic} implies the following

\begin{cor}\label{cor:specseq}
There is a spectral sequence converging to $H_n(T,D)$ with its first page containing a copy of the (reduced) homology of the independence complex of $G$.
\end{cor}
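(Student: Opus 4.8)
The plan is to mirror the proof of Theorem \ref{thm:acyclic}, but filter the total complex $(T,D)$ in the \emph{horizontal} direction (by the number of 1-marked vertices, i.e.\ by $i$) instead of the vertical one. First I would set up the descending filtration $F^p T_n := \bigoplus_{i-j=n,\ j \leq p} T_{i,j}$ (equivalently, an ascending filtration indexed by $j$), so that the associated graded pieces in bidegree $(p,q)$ recover $T_{\bullet,p}$ with the roles of $d$ and $\delta$ interchanged relative to the previous proof. The $0$-th differential $d^0$ of this spectral sequence is then the \emph{horizontal} map $d$, not $\delta$.

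The key computation is that on the first page one obtains
\begin{equation*}
E^1_{p,q} = H_{q}(T_{\bullet,p},d).
\end{equation*}
By the discussion around \eqref{eq:zerocomplex}, the row $p=0$ is exactly $H_q(T_{\bullet,0},d) \cong \tilde H_{q-1}(\mathrm{Ind}(G))$, which is the copy of the reduced homology of $\I G$ promised in the statement. I would simply cite this identification to establish the claim. Convergence to $H_n(T,D)$ is automatic: the filtration is finite (since $V$ is finite, both $i$ and $j$ are bounded), hence the spectral sequence of a filtered complex converges to the homology of the total complex, which by Theorem \ref{thm:acyclic} is $\mb Z$ concentrated in degree $0$.

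The only genuine point requiring care is bookkeeping with indices and signs. Because $\delta$ has bidegree $(0,1)$ and therefore \emph{raises} the second grading, I must be careful that filtering by $j$ produces a legitimate (exhaustive, bounded) filtration compatible with $D=d+\delta$, and that the induced $d^0$ really is $d$ acting within fixed $j$. Since $d$ preserves $j$ and $\delta$ strictly increases it, $\delta$ shifts filtration degree while $d$ does not, so on the associated graded the surviving $0$-th differential is indeed $d$; this is the standard mechanism and should go through cleanly. The main obstacle, then, is not conceptual but notational: aligning the $(p,q)$ conventions with those of the earlier proof so that the $p=0$ row lands precisely on the homology of $\I G$ and the spectral sequence is seen to converge (again by the finiteness of the filtration) to the acyclic total homology computed in Theorem \ref{thm:acyclic}.
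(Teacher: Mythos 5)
Your proposal follows essentially the same route as the paper: filter $(T,D)$ by the number of 2-marked vertices so that $d$ survives as the $d^0$ differential, identify the resulting $E^1$ via \eqref{eq:zerocomplex} and Proposition \ref{prop:dcomplex}, and get convergence from finiteness of the filtration together with the acyclicity of Theorem \ref{thm:acyclic}. However, there is one concrete error that would sink the argument as literally written: the filtration $F^pT_n:=\bigoplus_{i-j=n,\ j\le p}T_{i,j}$ is \emph{not} preserved by $D$. Since $\delta$ raises $j$ by one, it maps the top layer $j=p$ out of $F^p$, so this is not a filtration of the complex $(T,D)$ and induces no spectral sequence at all. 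The inequality must run the other way, as in the paper's proof: $F_pT_n:=\bigoplus_{i-j=n,\ j\ge p}T_{i,j}$, a descending filtration with $F_0T=T$ and $F_pT=0$ for large $p$. Then $d$ preserves filtration degree, $\delta$ strictly \emph{increases} it (moves deeper into the filtration), and on the associated graded $F_p/F_{p+1}\cong T_{\bullet,p}$ the induced differential is indeed $d$ --- which is exactly the mechanism you correctly describe in your final paragraph; that description is only consistent with the $j\ge p$ version, so the slip is directional rather than conceptual, but it is precisely the ``genuine point requiring care'' you flagged and then got backwards.

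Two smaller bookkeeping remarks: $i$ counts \emph{all} marked vertices, not the 1-marked ones (those number $i-j=n$), though within a fixed total degree filtering by $i$ and by $j$ are equivalent, so this costs nothing; and your $E^1_{p,q}=H_q(T_{\bullet,p},d)$ is simply the transpose of the paper's convention $E^1_{p,q}=H_p(T_{\bullet,q},d)$ --- the paper handles the unusual index shifts (coming from $d$ having bidegree $(-1,0)$ and $\delta$ bidegree $(0,1)$) by explicitly ``reshuffling'' $p$ and $q$, and either convention identifies the $j=0$ part with $\tilde H_{\bullet-1}(\I G)$. With the filtration direction corrected, your convergence argument (boundedness of the filtration, comparison with the limit computed in Theorem \ref{thm:acyclic}, for which the paper cites a standard comparison fact) matches the paper's proof.
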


\begin{proof}
Consider the spectral sequence associated to a filtration of $T$, ``orthogonal" to the one in \eqref{eq:filtration},
\begin{equation*}
T=F_{0}T \supset   \ldots \supset F_pT \supset F_{p+1}T \supset \ldots \supset 0
\end{equation*}
with
\begin{equation*}
F_pT_n := \bigoplus_{ \substack{ i-j=n \\ j\geq p } } T_{i,j}.
\end{equation*}

Since there are only finitely many nonvanishing $T_{i,j}$, the associated spectral sequence converges to the same limit as the one in the proof of Theorem \ref{thm:acyclic} (this is a standard fact; see, for instance, Proposition 3.5.1 in \cite{gm-homalg}). Its starting page $E^0$ is given by
\begin{align*}
& E^0_{p,q} = F_pT_{p-q} / F_{p+1}T_{p-q}= \bigoplus_{ \substack{ i-j=p-q, \\ j = p} } T_{i,j} = T_{2p-q,p}, \\
& d^0_{p,q}: E^0_{p,q} \longrightarrow E^0_{p,q+1}=d: T_{2p-q,p} \longrightarrow T_{2p-q-1,p}.
\end{align*}

Here it is important to note that these unusual index shifts arise because $d$ is of bidegree $(-1,0)$ while $\delta$ is of bidegree $(0,1)$. If we reshuffle $p$ and $q$ (or tilt and stretch our sheet of paper), then we may return to the conventional picture with
\begin{equation*}
 E^0_{p,q} = T_{p,q} \ \text{ and } \ d^0_{p,q}: T_{p,q} \to T_{p-1,q}.
\end{equation*}

With these conventions we find on the next page  
\begin{equation*}
E^1_{p,q}= H_p(T_{\bullet,q},d),
\end{equation*} 
and the maps $d^1_{p,q}:H_p(T_{\bullet,q},d)\to H_p(T_{\bullet,q-1},d)$ are induced by $\delta$,
\begin{equation*}
d^1[x] = [\delta x]. 
\end{equation*} 

Proposition \ref{prop:dcomplex} and Equation \eqref{eq:dcomplex2} identify the $q=0$ column with $\tilde{H}_{\bullet}(\mathrm{Ind}(G))$. The other columns are populated by direct sums of the homology groups of $\I {G\mi N[U]}$ for $U \subset V$ independent with $|U|=q$.
\end{proof}

The usefulness of this corollary lies in the simplicity of the spectral sequence's limit $E^\infty$. If we know the homology of the complexes $(T_{\bullet,j},d)$ for $j>0$, we can deduce information about $H_\bullet(\mathrm{Ind}(G))$ from studying the $E^1$ (or $E^2$-) page of this spectral sequence. Since we know that it eventually collapses, every entry except for a single copy of $\mb Z$ must disappear at some stage. 

In more dramatic words, as the spectral sequences progresses further, all but one entries of $E^1$ are eventually paired together, both partners doomed to killing each other. 
There will be only one lucky survivor, a representative from the group of maximum independent sets of $G$.

\begin{eg}
Let us look at a very simple example, $G=K_n$ the complete graph on $n$ vertices (more sophisticated examples can be found in Section \ref{sec:eg}). The $E^1$ page of the spectral sequence from Corollary \ref{cor:specseq} has two nontrivial rows, $E^1_{p,0}=H_p(T_{\bullet,0},d)$ containing the homology of $\I {K_n}$ and 
\begin{equation*}
E^1_{p,1}=H_p(T_{\bullet,1},d)  \overset{ \eqref{eq:dcomplex2}}{ \cong } \bigoplus_{ \substack{ U\subset V \text{ independent}  \\ |U|=1 }  } H_{p-1}( T_{\bullet,0}(G\mi N[U]),d ).
\end{equation*}
Here $G-N[U]= \emptyset$ because every vertex of $K_n$ is already a maximal (and maximum) independent set. By Proposition \ref{prop:dcomplex} the row $E^1_{p,1}$ is thus given by $E^1_{1,1}  \cong  \mb Z^n$ and $E^1_{p,1}  \cong  0$ for $p\neq 1$: 
\begin{equation*}
\begin{tikzpicture}
\draw (0,0) node {0} (1,0) node {$E^1_{1,0}$} (1,1) node {$\mb Z^n$} (2,0) node {0} (2,1) node {0}  (2,2) node {0} ;
\draw[->] (1,0.3) -- (1,.8) node[pos=0.5,right] {$d^1$};
 \draw[->] (2,0.3) -- (2,.8);
 \draw[->] (2,1.3) -- (2,1.8);
\end{tikzpicture}
\end{equation*}

The only way for this spectral sequence to exhibit the expected convergence is to have the differential $d^1_{1,0}: E^1_{1,0} \to E^1_{1,1}$ satisfy
\begin{equation*}
\mathrm{im} \ \! d^1_{1,0} \cong \mb Z^{n-1} \text{ and } \ker d^1_{1,0} \cong 0.
\end{equation*}

This implies $E^1_{1,0}\cong \tilde H_0(\I {K_n})\cong \mb Z^{n-1}$ and $E^1_{p,0}\cong \tilde H_{p-1}(\I {K_n})\cong 0$ for $p\neq 1$.
\end{eg}

In the general case there is a similar behavior with respect to maximum (and maximal) independent sets. 

\begin{prop}\label{thm:properties}
Let $\alpha(G)$ denote the \textit{independence number} of $G$, i.e.\ the cardinality of a maximum independent set in $G$. Then we have for the spectral sequence from Corollary \ref{cor:specseq}:
\begin{enumerate}
\item $E^\infty_{p,q} \cong \begin{cases}
                 \mb Z & p=q= \alpha(G) \\
                 0 & \text{ else}
                \end{cases}$
\item $E^1_{p,p}\cong \mb Z^{n_p}$ where $n_p$ is the number of maximal independent sets of size $p$ in $G$
\item all diagonal entries with $p<\alpha(G)$ vanish already on the $E^2$ page, 
\begin{equation*}
E^2_{p,p}=H_p(E^1_{p,\bullet},d^1) \cong H_p(H_p(T_{\bullet,\bullet},d),\delta)\cong0
\end{equation*}
\end{enumerate}
\end{prop}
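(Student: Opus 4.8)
The plan is to treat the three items in the order (2), (1), (3): (2) identifies the diagonal of the first page, (1) pins down the single surviving entry by convergence, and (3) is a refinement requiring genuine work. For (2) I would compute the diagonal directly. By \eqref{eq:dcomplex2} one has $E^1_{p,p}=H_p(T_{\bullet,p},d)\cong\bigoplus_{|U|=p}H_0(T_{\bullet,0}(G\mi N[U]),d)\cong\bigoplus_{|U|=p}\tilde H_{-1}(\I{G\mi N[U]})$, the sum over independent $U$. Now $\tilde H_{-1}(\I K)\cong\mb Z$ precisely when $K$ has no vertices and is $0$ otherwise, and $G\mi N[U]=\emptyset$ exactly when $N[U]=V$, i.e.\ when $U$ is a maximal independent set. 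Hence only the maximal $U$ of size $p$ contribute, each a copy of $\mb Z$, giving $E^1_{p,p}\cong\mb Z^{n_p}$.

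For (1) I would argue from convergence to $H_\bullet(T,D)$. Off the diagonal, an entry $E^\infty_{p,q}$ with $p\neq q$ is a graded piece of $H_{p-q}(T,D)=0$ and so vanishes. On the diagonal $E^\infty_{p,p}\cong\mathrm{gr}_p H_0(T,D)$ for the filtration by the number of $2$-markings, and the key observation is that the generator of $H_0(T,D)\cong\mb Z$ can be represented at filtration level exactly $\alpha(G)$: choosing a maximum independent set $M$, the all-$2$-marked marking $m_M$ is a $D$-cycle lying in $T_{\alpha,\alpha}$, and the relations $m_A+m_{A\cup\{v\}}\in\image{D}$ (valid whenever $A$ and $A\cup\{v\}$ are independent) identify $[m_M]$ with $\pm[m_\emptyset]$, a generator. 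Since no marking carries more than $\alpha(G)$ $2$-markings, the filtration satisfies $F_pH_0=H_0$ for all $p\le\alpha(G)$ and $F_{\alpha(G)+1}H_0=0$, whence $\mathrm{gr}_pH_0$ is $\mb Z$ for $p=\alpha(G)$ and $0$ otherwise. This gives (1) and does not use (3).

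For (3) I would first reduce to a statement about a single map. From $E^1_{p,q}\cong\bigoplus_{|U|=q}\tilde H_{p-q-1}(\I{G\mi N[U]})$ the entries strictly below the diagonal vanish (reduced homology in degrees below $-1$ is zero); in particular $E^1_{p,p+1}=0$, so the outgoing $d^1$ from $E^1_{p,p}$ is zero and $E^2_{p,p}\cong\mathrm{coker}\big(d^1\colon E^1_{p,p-1}\to E^1_{p,p}\big)$, with $d^1$ induced by $\delta$. Thus (3) is the surjectivity of this $\delta$-map for $p<\alpha(G)$. I would make it explicit: $E^1_{p,p-1}\cong\bigoplus_{|W|=p-1}\tilde H_0(\I{G\mi N[W]})$, and $\delta$ sends a reduced $0$-cycle $[v]-[v']$ of $\I{G\mi N[W]}$ to $[m_{W\cup\{v\}}]-[m_{W\cup\{v'\}}]$, where $[m_{W\cup\{x\}}]$ is the generator attached to $W\cup\{x\}$ when that set is maximal and is $0$ otherwise. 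Since $W\cup\{x\}$ is maximal exactly when $x$ is isolated in $\I{G\mi N[W]}$, the image is generated by two kinds of relation: when $G\mi N[W]$ is not complete, each maximal $W\cup\{v\}$ is individually hit (a \emph{kill}); when $G\mi N[W]$ is complete, the maximal sets $W\cup\{v\}$ are merely \emph{identified} with one another. Consequently $E^2_{p,p}$ is the free abelian group on the single-vertex-exchange classes of maximal $p$-sets that contain no killed member, and the task is to prove that for $p<\alpha(G)$ every such class contains a killed member.

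I expect this last combinatorial step to be the main obstacle. A maximal set of non-maximum size need not be killed \emph{directly}: for the path $P_5$ the maximal set $\{v_2,v_4\}$ has both private-neighbour sets singletons, so every $G\mi N[\{v_2,v_4\}\setminus\{u\}]$ is complete and no non-clique private neighbourhood is available. The intended remedy is to chase single-vertex exchanges, using a maximum independent set to reach, after finitely many exchanges, a set $U'$ in the same class that admits a subset $W$ with $G\mi N[W]$ non-complete (equivalently, a vertex with two non-adjacent private neighbours), thereby killing the whole class; for $P_5$ one indeed has $\{v_2,v_4\}\sim\{v_1,v_4\}$ with $\{v_1,v_4\}$ directly killed. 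Proving that $|U|<\alpha(G)$ always forces such a reducible configuration somewhere in the exchange class is the crux of (3); the remainder is bookkeeping with the two relations above.
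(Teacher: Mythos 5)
Your items (1) and (2) are correct. For (2) you follow essentially the paper's route: split $(T_{\bullet,p},d)$ over the 2-marked sets and observe that a summand contributes in the relevant degree exactly when $U$ is maximal, i.e.\ $G\mi N[U]=\emptyset$. For (1) you take a genuinely different and self-contained path. The paper deduces the position $p=q=\alpha(G)$ of the survivor from part (3) (kills on the diagonal below $\alpha(G)$) together with the vanishing of $E^1_{p,p}$ for $p>\alpha(G)$; you instead locate it directly by showing that the generator $[m_\emptyset]$ of $H_0(T,D)$ equals $\pm[m_M]$ for a maximum independent set $M$, via the relations $D(\text{2 on }A,\ 1\text{ on }v)=m_A+m_{A\cup\{v\}}$, so that it is represented in filtration level $\alpha(G)$ while $F_{\alpha(G)+1}T=0$. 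This argument is correct, and it has the real advantage of making (1) independent of (3).

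The gap is in (3). Your reduction is sound --- since $E^1_{p,p+1}=0$, one has $E^2_{p,p}=\mathrm{coker}\bigl(d^1\colon E^1_{p,p-1}\to E^1_{p,p}\bigr)$, and your description of $\image{d^1}$ in terms of direct kills (summands $W$ with $G\mi N[W]$ not a clique) and identifications (clique summands) is accurate --- but you explicitly leave the crux unproven: you never show that for $p<\alpha(G)$ every exchange class of maximal $p$-sets contains a killed member, so your proposal does not establish (3). It is worth knowing that your $P_5$ observation refutes the step the paper itself uses at exactly this point: the paper claims that for every maximal non-maximum $I$ there exists $z\in I$ such that $G\mi N[I\setdiff \{z\}]$ contains two simultaneously markable (non-adjacent) vertices, justified only by ``otherwise $I$ would be maximum'', and then constructs the explicit $\delta$-preimage $[m_z-m_x]$ of $[m_0]$. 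For $I=\{v_2,v_4\}$ in $P_5$ both $G\mi N[\{v_2\}]$ and $G\mi N[\{v_4\}]$ are single edges, yet $\alpha(P_5)=3$, so no such $z$ exists and the paper's construction never reaches the generator $e_{\{v_2,v_4\}}$; as you indicate, that generator is hit only through the identification $e_{\{v_2,v_4\}}-e_{\{v_1,v_4\}}$ (clique summand $W=\{v_4\}$) combined with the direct kill of $e_{\{v_1,v_4\}}$ (summand $W=\{v_1\}$, where $G\mi N[\{v_1\}]$ is the non-clique path on $v_3,v_4,v_5$). So the missing exchange-class lemma is not an artifact of your approach: it is precisely the step the paper's proof also elides, and proving it --- for instance by exchanging vertices of $I$ against a maximum independent set one at a time until a non-clique summand appears --- is what a complete proof of (3) requires; given that the paper's justification fails as stated, the general validity of (3) merits this independent verification.
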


\begin{proof}
Assertion (1) holds by construction because the spectral sequence converges to the associated graded piece of the homology of $(T,D)$. It follows however also from (3) by Theorem \ref{thm:acyclic} which assures that the surviving entry must lie on the diagonal; these entries represent the total degree 0 in which the only nontrivial homology of $(T,D)$ is concentrated. 

To prove (2) we consider the diagonal entries $E^1_{p,p}=H_p(T_{\bullet,p},d)$. Proposition \ref{prop:dcomplex} shows that this group is nonzero if and only if $G$ has a maximal independent set of cardinality $p$ (because every nonempty graph has $H_0(T_{\bullet,0},d) \cong 0$). Since $d$ does not act on 2-marked vertices, the complex $(T_{\bullet,p},d)$ splits into a direct sum of complexes, one for each 2-marking of size $p$. Thus, we find one generator for each maximal independent set of cardinality $p$.
In particular, if $G$ does not have such maximal independent sets, then $E^1_{p,p}\cong 0$.

For (3) let now $p<\alpha(G)$. The map $d^1$ on $E^1$ is given by the restriction of $\delta$ to the homology classes of $(T,d)$. Our goal is therefore to find for each generator of $H_p(T_{\bullet,p},d)$ a homology class in $H_p(T_{\bullet,p-1},d)$ that gets mapped to it by $\delta$. Note that, if it were not for the restriction to homology classes of $d$, this task would be trivial as the homology with respect to $\delta$ vanishes (Proposition \ref{prop:deltahomtrivial}).

By (2) the generators of $H_p(T_{\bullet,p},d)$ are represented by maximal independents sets $I$ with $|I|=p$. For each such $I$ there exists a vertex $z\in I$ such that the graph $G\mi N[I\setdiff \{z\}]$ has at least two vertices that can be simultaneously marked -- otherwise $I$ would be not only maximal, but also maximum independent. 

Choose two such vertices, denote them by $x$ and $y$, and consider the markings
\begin{equation*}
 m_0: v \mapsto \begin{cases}
               2 & \text{ if } v \in I, \\ 0 & \text{ else,}
              \end{cases}
             \quad
 m_z: v \mapsto \begin{cases}
               1 & \text{ if }v=z, \\ 2 & \text{ if } v \in I\setdiff\{ z\}, \\ 0 & \text{ else,}
              \end{cases}
            \end{equation*}
\begin{equation*}
            m_x: v \mapsto \begin{cases}
               1 & \text{ if }v=x, \\ 2 & \text{ if } v \in I\setdiff \{x,z\}, \\ 0 & \text{ else,}
              \end{cases}
              \quad  
              m_{x,y}: v \mapsto \begin{cases}
               1 & \text{ if }v =y, \\ 2 & \text{ if } v \in I\setdiff \{y,z\}, \\ 0 & \text{ else.}
              \end{cases}
\end{equation*}
See Figure \ref{fig:zeroclass} for an example.

We have $d(m_z - m_x)=0$ and, since $\{v,z\} \in E$ holds for every vertex $v$ of $G\mi N[I\setdiff \{z\}]$, the element $m_z - m_x$ cannot lie in the image of $d$. Hence, 
\begin{equation*}
 0 \neq [m_z - m_x] \in H_p(T_{\bullet,p-1},d).
\end{equation*}

Moreover, since $\delta m_x= d m_{x,y}$, we have found a $\delta$-preimage of $[m_0]$, 
\begin{equation*}
 [\delta (m_z-m_x)] = [m_0- \delta m_x] = [m_0] \in H_p(T_{\bullet,p},d).
\end{equation*} 
\end{proof}

\begin{figure}
 \begin{tikzpicture}[scale=0.5]
  \coordinate  (v1) at (-2,-2); 
   \coordinate  (v2) at (2,-2);
   \coordinate (v3) at (2,2); 
    \coordinate (v4) at (-2,2) ; 
   \draw (v1) -- (v2);
   \draw (v2) -- (v3);
   \draw (v3) -- (v4);
   \draw (v4) -- (v1);
   \coordinate  (w1) at (-1,-1); 
   \coordinate  (w2) at (1,-1);
   \coordinate (w3) at (1,1); 
    \coordinate (w4) at (-1,1); 
    \draw (w1) -- (w2);
   \draw (w2) -- (w3);
   \draw (w3) -- (w4);
   \draw (w4) -- (w1);
   \draw (v1) -- (w1);
   \draw (v2) -- (w2);
   \draw (v3) -- (w3);
   \draw (v4) -- (w4);
  \filldraw[fill=black] (v1)node[above left,cyan]{$y$} node[xshift=-.5cm,yshift=.9cm]{$m_0=$}circle (.0666) ;
  \filldraw[fill=black] (v2) circle (.0666) ;
  \filldraw[fill=black] (v3)node[right,cyan]{$x$} circle (.0666) ;
  \filldraw[fill=white] (v4)node[left,cyan]{$z$} circle (0.13);
  \filldraw[fill=black] (w1) circle (.0666) ;
  \filldraw[fill=white] (w2) circle (.13);
  \filldraw[fill=black] (w3) circle (.0666) ;
  \filldraw[fill=black] (w4) circle (.0666) ;
   \end{tikzpicture} 
  \begin{tikzpicture}[scale=0.5]
  \coordinate  (v1) at (-2,-2); 
   \coordinate  (v2) at (2,-2);
   \coordinate (v3) at (2,2); 
    \coordinate (v4) at (-2,2) ; 
   \draw (v1) -- (v2);
   \draw (v2) -- (v3);
   \draw (v3) -- (v4);
   \draw (v4) -- (v1);
   \coordinate  (w1) at (-1,-1); 
   \coordinate  (w2) at (1,-1);
   \coordinate (w3) at (1,1); 
    \coordinate (w4) at (-1,1); 
    \draw (w1) -- (w2);
   \draw (w2) -- (w3);
   \draw (w3) -- (w4);
   \draw (w4) -- (w1);
   \draw (v1) -- (w1);
   \draw (v2) -- (w2);
   \draw (v3) -- (w3);
   \draw (v4) -- (w4);
  \filldraw[fill=black] (v1)node[xshift=-.5cm,yshift=.9cm]{$m_z=$} circle (.0666) ;
  \filldraw[fill=black] (v2) circle (.0666) ;
  \filldraw[fill=black] (v3) circle (.0666) ;
  \filldraw[fill=orange] (v4) circle (0.13);
  \filldraw[fill=black] (w1) circle (.0666) ;
  \filldraw[fill=white] (w2) circle (.13);
  \filldraw[fill=black] (w3) circle (.0666) ;
  \filldraw[fill=black] (w4) circle (.0666) ;
   \end{tikzpicture} 
    \begin{tikzpicture}[scale=0.5]
  \coordinate  (v1) at (-2,-2); 
   \coordinate  (v2) at (2,-2);
   \coordinate (v3) at (2,2); 
    \coordinate (v4) at (-2,2) ; 
   \draw (v1) -- (v2);
   \draw (v2) -- (v3);
   \draw (v3) -- (v4);
   \draw (v4) -- (v1);
   \coordinate  (w1) at (-1,-1); 
   \coordinate  (w2) at (1,-1);
   \coordinate (w3) at (1,1); 
    \coordinate (w4) at (-1,1); 
    \draw (w1) -- (w2);
   \draw (w2) -- (w3);
   \draw (w3) -- (w4);
   \draw (w4) -- (w1);
   \draw (v1) -- (w1);
   \draw (v2) -- (w2);
   \draw (v3) -- (w3);
   \draw (v4) -- (w4);
  \filldraw[fill=black] (v1)node[xshift=-.5cm,yshift=.9cm]{$m_x=$} circle (.0666) ;
  \filldraw[fill=black] (v2) circle (.0666) ;
  \filldraw[fill=black] (v4) circle (.0666) ;
  \filldraw[fill=orange] (v3) circle (0.13);
  \filldraw[fill=black] (w1) circle (.0666) ;
  \filldraw[fill=white] (w2) circle (.13);
  \filldraw[fill=black] (w3) circle (.0666) ;
  \filldraw[fill=black] (w4) circle (.0666) ;
   \end{tikzpicture} 
       \begin{tikzpicture}[scale=0.5]
  \coordinate  (v1) at (-2,-2); 
   \coordinate  (v2) at (2,-2);
   \coordinate (v3) at (2,2); 
    \coordinate (v4) at (-2,2) ; 
   \draw (v1) -- (v2);
   \draw (v2) -- (v3);
   \draw (v3) -- (v4);
   \draw (v4) -- (v1);
   \coordinate  (w1) at (-1,-1); 
   \coordinate  (w2) at (1,-1);
   \coordinate (w3) at (1,1); 
    \coordinate (w4) at (-1,1); 
    \draw (w1) -- (w2);
   \draw (w2) -- (w3);
   \draw (w3) -- (w4);
   \draw (w4) -- (w1);
   \draw (v1) -- (w1);
   \draw (v2) -- (w2);
   \draw (v3) -- (w3);
   \draw (v4) -- (w4);
  \filldraw[fill=orange] (v1)node[xshift=-.6cm,yshift=.9cm]{$m_{x,y}=$} circle (.13) ;
  \filldraw[fill=black] (v2) circle (.0666) ;
  \filldraw[fill=black] (v4) circle (.0666) ;
  \filldraw[fill=white] (v3) circle (0.13);
  \filldraw[fill=black] (w1) circle (.0666) ;
  \filldraw[fill=white] (w2) circle (.13);
  \filldraw[fill=black] (w3) circle (.0666) ;
  \filldraw[fill=black] (w4) circle (.0666) ;
   \end{tikzpicture}
   \caption{Examples for the markings constructed in the proof of Proposition \ref{thm:properties}. Vertices marked by 1 are colored in orange, 2-marked vertices in white.}
	\label{fig:zeroclass}
\end{figure}
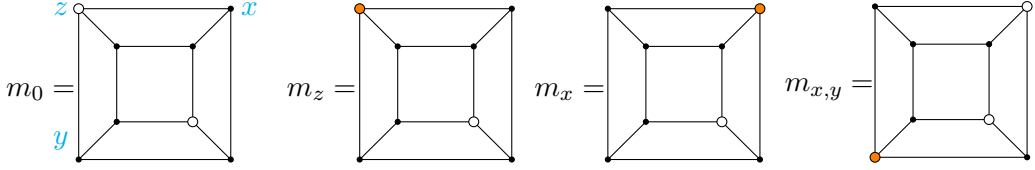

The preceding proposition improves the computational capability of the spectral sequence from Corollary \ref{cor:specseq}. This implies that in ``good'' cases the spectral sequence contains already enough information to fully determine the homology of $\mathrm{Ind}(G)$ (or at least to find relations between the groups in different dimensions). In ``not so good'' cases one has to examine the differential $d^1$ or throw in some additional information. Fortunately, $d^1$ is induced by the map $\delta$ and therefore rather simple.

Here the term ``good'' essentially means that we know or are able to compute the homology of the independence complexes of the graphs $G\mi N[U]$ for $U \subset V$ independent. For instance, if $G$ has many vertices of high valence or is a very symmetric graph, then the graphs $G\mi N[U]$ become very simple as the cardinality of $U$ grows. This is demonstrated by the examples in the next section.  
\newline

Last, but not least, there is one peculiar property of the spectral sequence that holds for a large class of examples, including all paths and cyclic graphs. 

\begin{thm} \label{thm:vp}
Let $G$ be a path or cyclic graph.
If $E^1_{p,p}\cong 0$, i.e.\ $G$ has no maximal independent set of cardinality $p$, then all entries of the upper column $E^1_{p,q}$, $q>0$, vanish.
\end{thm}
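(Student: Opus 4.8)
The plan is to reduce the statement to a summand-wise vanishing and then argue by contraposition. By Corollary~\ref{cor:specseq} together with Proposition~\ref{prop:dcomplex} and \eqref{eq:dcomplex2}, the entries in question decompose as
\begin{equation*}
E^1_{p,q}\cong\bigoplus_{\substack{U\subset V\text{ independent}\\|U|=q}}\tilde{H}_{p-q-1}(\I{G\mi N[U]}),
\end{equation*}
and by Proposition~\ref{thm:properties}(2) the hypothesis $E^1_{p,p}\cong0$ means precisely that $G$ has no maximal independent set of cardinality $p$. So it suffices to show each summand vanishes, and I would do this by contraposition: assuming $\tilde{H}_{p-q-1}(\I{G\mi N[U]})\neq0$ for some independent $U$ with $|U|=q>0$, I would construct a maximal independent set of $G$ of cardinality exactly $p$, contradicting the hypothesis.

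The key structural input is that, since $G$ is a path or a cycle and $q>0$ forces $U\neq\emptyset$, deleting the closed neighborhood $N[U]$ splits $G$ into a disjoint union of sub-paths $P_{n_1}\sqcup\cdots\sqcup P_{n_s}$. As the independence complex of a disjoint union of graphs is the join of the factors' independence complexes, this gives $\I{G\mi N[U]}\cong\I{P_{n_1}}*\cdots*\I{P_{n_s}}$. I would then invoke Kozlov's computation \cite{kozlov}: $\I{P_n}$ is contractible when $n\equiv1\pmod3$, and homotopy equivalent to $S^{k-1}$ when $n=3k$ or $n=3k-1$. By the join formula $S^{a}*S^{b}\simeq S^{a+b+1}$ (and the contractibility of any join with a contractible factor), the complex is contractible, hence has vanishing reduced homology, unless every segment is \emph{good}, i.e.\ $n_i\not\equiv1\pmod3$, in which case it is the sphere $S^{(\sum_i k_i)-1}$. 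Thus $\tilde{H}_{p-q-1}\neq0$ forces all segments to be good and $\sum_i k_i=p-q$; the degenerate case $s=0$, where $G\mi N[U]=\emptyset$ and $\tilde{H}_{-1}\cong\mb Z$, corresponds to $U$ being already maximal with $p=q$.

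Next I would assemble the desired set. In each good segment I choose a minimum-cardinality maximal independent set $M_i$; since the independent domination number of $P_{n_i}$ is $\lceil n_i/3\rceil=k_i$, one has $|M_i|=k_i$. Setting $W:=U\cup\bigcup_i M_i$, its cardinality is $q+\sum_i k_i=p$. I would then verify that $W$ is independent — no edge occurs inside $U$, inside any $M_i$, between $M_i$ and $U$ (as $M_i\subset G\mi N[U]$ avoids all neighbors of $U$), or between distinct segments (separate components of $G\mi N[U]$) — and that $W$ dominates $G$: every vertex of $N[U]$ is dominated by $U$, and every vertex of a segment is dominated within it by $M_i$. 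Hence $W$ is a maximal independent set of cardinality $p$, contradicting the hypothesis, so every summand vanishes and $E^1_{p,q}\cong0$.

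I expect the main obstacle to be the numerology linking topology to combinatorics: one must simultaneously match the sphere dimension $(\sum_i k_i)-1$ with the prescribed degree $p-q-1$ and check that the minimum maximal independent set sizes $k_i=\lceil n_i/3\rceil$ of the good segments sum to exactly $p-q$, so that $|W|=p$ comes out precisely. This is where the special homotopy type of $\I{P_n}$ is indispensable, and it also indicates why the argument should not be expected to generalize: for arbitrary $G$ the graphs $G\mi N[U]$ need not be disjoint unions of paths, and the clean ``contractible-or-single-sphere'' dichotomy fails — which is the source of the counterexample hinted at in the introduction.
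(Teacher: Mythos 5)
Your proof is correct, but it takes a genuinely different route from the paper's. The paper argues by downward induction on $|U|$ along chains of subsets of a fixed independent set $I$ with $|I|=p$: it feeds the hypothesis $\tilde H_{-1}(\I{G \mi N[I]})\cong 0$ into the long exact homology sequence of the cofiber sequence \eqref{eq:cofiber}, and shows by a case analysis of how re-inserting $N(v)$ changes the disjoint union of paths (creating isolated vertices, lengthening one or two components, or changing nothing) that the connecting homomorphism is injective at each stage, thereby propagating the vanishing up the column. You instead compute each summand of $E^1_{p,q}$ outright: $G\mi N[U]$ is a disjoint union of paths, so by \eqref{eq:homtype} and the join formula its independence complex is either contractible or a single sphere $S^{(\sum_i k_i)-1}$, forcing $\sum_i k_i=p-q$ whenever $\tilde H_{p-q-1}\neq 0$; the elementary fact $i(P_n)=\lceil n/3\rceil$ (which you correctly match to $k_i$ for both residues $n_i\equiv 0,2 \pmod 3$) then lets you enlarge $U$ by minimum maximal independent sets of the good segments to a maximal independent set of $G$ of size exactly $p$, contradicting the hypothesis via Proposition~\ref{thm:properties}(2); your verification that $W$ is independent and dominating is complete, and the degenerate summand $\tilde H_{-1}(\emptyset)$ is correctly confined to $p=q$. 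Both arguments rest on the same two inputs (Kozlov's homotopy types and the join decomposition), but yours avoids the long exact sequence and the induction entirely, and it has one further merit: it treats \emph{every} independent $U$ of size $q$ uniformly, including those not contained in any independent set of size $p$ (e.g.\ $U$ whose maximal extensions all have other cardinalities), which the paper's chain of implications along subsets of a fixed $I$ does not explicitly reach. What the paper's scheme buys in exchange is a template one might hope to run on families where only partial homotopical information is available; but as you note, both proofs ultimately need the precise sphere dimension in \eqref{eq:homtype}, which is exactly why neither generalizes beyond paths and cycles without new input.
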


This theorem has strong implications. Together with
\begin{equation*}
 E^1_{p,q} = H_p(T_{\bullet,q},d) \overset{\eqref{eq:dcomplex2}}{\cong} \bigoplus_{ \substack{ U\subset V \text{ independent}  \\ |U|=q }  } H_{p-q}(T_{\bullet,0}(G\mi N[U]),d)
\end{equation*}
it allows to deduce the vanishing of the rank $p-q-1$ homology groups of the independence complex of \emph{every} subgraph of $G$ that can be obtained by deleting $q$ independent vertices and their neighborhoods.\footnote{In a way reflecting the ancient calculus tables where solutions of integrals are listed that were found by differentiating all kinds of functions.}

\begin{proof}
Throughout this proof we abbreviate $H_i(T_{\bullet,0},d)$ by $H_i$ and we drop set brackets in the notation for induced subgraphs.

Let $G$ be a path or cyclic graph and $p>1$ such that $G$ has no maximal independent sets of size $p$. Let $I\subset V$ be independent with $\vert I \vert =p$. Note that it suffices to discuss the case of paths since $G\mi N[I]$ is a union of paths in both cases.

We will show for all $k\in \{2,\ldots, p\}$ and all $U\subset I$ with $ |U|=k$ that 
$$
H_{p-k}( G \mi N[U] ) \cong 0 
$$
implies
$$
H_{p-k+1}( G \mi N[U\setdiff v]) \cong 0
$$
for each $v \in U$.

To prove this we need three ingredients:
\begin{itemize}
\item A cofiber sequence, introduced in \cite{ADAMASZEK2012}, 
\begin{equation}\label{eq:cofiber}
\I {G\mi N[v]} \hookrightarrow \I {G \mi \{v\}} \hookrightarrow \I G \to
\Sigma \I {G\mi N[v]} \to \ldots,
\end{equation} 
where $\Sigma$ denotes unreduced suspension. It expresses $\I G$ as the mapping cone of the subcomplex inclusion $\I {G\mi N[v]} \hookrightarrow \I {G\mi v}$. See \cite{ADAMASZEK2012} for the details.
\item The homotopy type of $\I {P_n}$, a path on $n$ vertices, shown in \cite{kozlov} to be
\begin{equation}\label{eq:homtype}
\I {P_n} \simeq 
\begin{cases}
S^{k-1} & \text{ if } n=3k \text{ or } n=3k-1,\\
\{\mathrm{pt}\} & \text{ else.} 
\end{cases}
\end{equation}
\item The fact that $\I {G \sqcup G'}= \I G * \I {G'}$ with $*$ denoting the topological join operation. In that regard, recall also that $S^i * S^j $ is homeomorphic to $S^{i+j+1}$.
\end{itemize} 

The sequence \eqref{eq:cofiber} gives rise to a long exact sequence in homology,\footnote{using that the mapping cone of a subcomplex inclusion $A \hookrightarrow B$ is $B$ with a cone attached over $A$ and thus homotopy equivalent to $B/A$.}
\begin{equation*}
\ldots \to \tilde{H}_n(\I {G\mi N[v]}) \to \tilde{H}_n(\I {G \mi v}) \to \tilde{H}_n( \I G ) \to
\tilde{H}_{n-1}(\I {G\mi N[v]} )\to \ldots
\end{equation*} 
Applying this to $G\mi N[U\setdiff v]=G \mi N[U] \pl N[v]$ with $U$ as above and $v\in U$ (here $+$ means ``putting a set of vertices back into the graph", $G \mi Y \pl X := G \mi (Y \setdiff X)$ for $X\subset Y\subset V$), the cofiber sequence reads
\begin{equation*}
\I {G\mi N[U]} \hookrightarrow \I {G\mi N[U] \pl N(v) } \hookrightarrow \I {G\mi N[U] \pl N[v]} =\I {G\mi N[U\setdiff v]}  \to \ldots
\end{equation*} 
The associated long exact sequence in homology is
\begin{equation} \label{eq:les}
\ldots \to H_n(G\mi N[U]) \to H_n( G\mi N[U] +N(v) ) \to H_n( G\mi N[U\setdiff v] ) \overset{\partial}{\to}
H_{n-1}( G\mi N[U] )\to \ldots
\end{equation} 
where we used the identification $H_i(T_{\bullet,0}(G),d) \cong \tilde H_{i-1}(\I G)$. 

Our goal is thus to show that for $n=p-k+1$ and $H_{n-1}( G\mi N[U] )\cong 0$ the map $\partial$ is an isomorphism.
\newline

1.\ Case $k=p$, $U=I$:

Let $v\in U$. By assumption, $H_0( G\mi N[U] )\cong 0$, so the graph $G\mi N[U]$ is nonempty. Putting $N[v]$ back into $G\mi N[U]$ ``creates'' at least two more vertices. 

If the resulting graph $G\mi N[U]+N[v]=G\mi N[U\setdiff v]$ has more than three vertices, then by \eqref{eq:homtype} its independence complex is homotopy equivalent to either a point or a sphere $S^m$ with $m>0$, hence $H_1( G\mi N[U\setdiff v] )\cong 0$. 

The case that $G\mi N[U\setdiff v]$ has exactly three vertices is possible only in one configuration; it is a path $P_3$ containing the first or last vertex of $G$ which must have been $v$. If we mark instead of $v$ its neighbor $v'$, then $G\mi N[(U\setdiff v) \cup v']=\emptyset$, a contradiction to the assumption that $G$ does not admit a maximal independent set of size $p$. 

See Figure \ref{fig:egcase1} for an example.

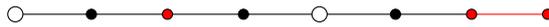
\begin{figure}[h]
\begin{tikzpicture}[scale=1]
  \coordinate  (v1) at (0,0); 
   \coordinate  (v2) at (1,0);
   \coordinate (v3) at (2,0); 
   \coordinate (v4) at (3,0); 
   \coordinate (v5) at (4,0); 
   \coordinate (v6) at (5,0); 
   \coordinate (v7) at (6,0); 
    \coordinate (v8) at (7,0); 
   \draw (v1) -- (v2) ;
   \draw (v2) -- (v3) ;
    \draw (v3) -- (v4) ;
     \draw (v4) -- (v5);
      \draw (v5) -- (v6) ;
       \draw (v6) -- (v7) ;
       \draw[red] (v7) -- (v8) ;
    \filldraw[fill=white] (v1) circle (.1);
  \filldraw[fill=black] (v2) circle (.0666) ;
  \filldraw[fill=red] (v3) circle (.0666);
    \filldraw[fill=black] (v4) circle (.0666);
      \filldraw[fill=white] (v5) circle (.1);
        \filldraw[fill=black] (v6) circle (.0666);
          \filldraw[fill=red] (v7) circle (.0666);
           \filldraw[fill=red] (v8) circle (.0666);
   \end{tikzpicture}
   \caption{An example for the case $k=p=2$ in the graph $P_8$. 2-marked vertices are colored white, the graph $G\mi N[U]$ is colored red. For a non-example remove the two rightmost vertices and keep the same marking.}
   \label{fig:egcase1}
\end{figure} 

2.\ Case $2\leq k < p$: 

Let $U \subset I$ with $|U|=k$ and $v \in U$. The relevant part of the long exact sequence \eqref{eq:les} is given by
$$
 H_{p-k+1}( G\mi N[U] ) \overset{i_*}{\to} 
H_{p-k+1}( G\mi N[U] +N(v) ) \overset{j_*}{\to} H_{p-k+1}( G\mi N[U\setdiff v] ) \overset{\partial}{\to}
H_{p-k}( G\mi N[U] ).
$$
Putting $N(v)$ back into the graph $ G\mi N[U]$ has one of the following effects:
\begin{enumerate}
 \item[(a)] It creates one or two isolated vertices in $G\mi N[U] \pl N(v)$.
 \item[(b)] $G\mi N[U]$ is a disjoint union of paths and $G\mi N[U] \pl N(v)$ is as well, but with one or two components lengthened by one vertex.
 \item[(c)] $G\mi N[U] = G\mi N[U] \pl N(v)$. This happens if and only if $v$ is the first or last vertex of $G$ and its neighbor's neighbor is already marked.
\end{enumerate}

All three cases have $H_{p-k+1}( G\mi N[U\setdiff v] )\cong 0$ as consequence, because
\begin{enumerate}
 \item[(a)] $G\mi N[U] \pl N(v) \simeq \{\mathrm{pt}\}$, since $\I {G' \sqcup v'}\simeq \I {G'}*\{ \mathrm{pt}\}$ is a cone on $\I {G'}$ for any isolated vertex $v'$ in any graph $G'$.
 
 \item[(b)] $G\mi N[U] = P_{n_1} \sqcup \ldots \sqcup P_{n_i}$ and $G\mi N[U] \pl N(v)$ is isomorphic to
\begin{equation*}
   \text{either }  P_{n_1+1} \sqcup P_{n_2} \sqcup \ldots \sqcup P_{n_i} \text{ or }  P_{n_1+1} \sqcup P_{n_2+1}\sqcup P_{n_3} \sqcup  \ldots \sqcup P_{n_i}.
\end{equation*}
From \eqref{eq:homtype} we see that $G\mi N[U] \pl N(v)$ is homotopy equivalent to a point if $n_1$ and $n_2$ are both not congruent 2 modulo 3. On the other hand, if they are, then 
\begin{equation*}
 \I {G\mi N[U]} \simeq \I {G\mi N[U]\pl N(v)}, 
\end{equation*}
so
\begin{equation*}
 H_\bullet(G\mi N[U]) \cong H_\bullet(G\mi N[U]\pl N(v)).
\end{equation*}
This means $i_*$ is an isomorphism. Since \eqref{eq:les} is exact, we get $\ker{\partial} \cong \image{j_*}\cong 0 $.
\item[(c)] $i_*$ is the identity map. Again, by exactness, $\ker{\partial} \cong  0 $.
\end{enumerate}

See Figure \ref{fig:egcase2} for an example.
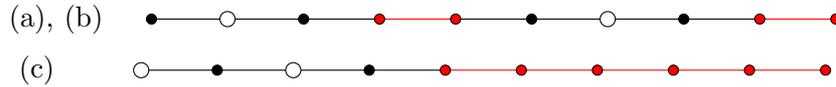
\begin{figure}[h]
\begin{tikzpicture}[scale=1]
  \coordinate  (v1) at (0,0); 
   \coordinate  (v2) at (1,0);
   \coordinate (v3) at (2,0); 
   \coordinate (v4) at (3,0); 
   \coordinate (v5) at (4,0); 
   \coordinate (v6) at (5,0); 
   \coordinate (v7) at (6,0); 
    \coordinate (v8) at (7,0); 
        \coordinate (v9) at (8,0); 
            \coordinate (v10) at (9,0); 
   \draw (v1) -- (v2) ;
   \draw (v2) -- (v3) ;
    \draw (v3) -- (v4) ;
     \draw[red] (v4) -- (v5);
      \draw (v5) -- (v6) ;
       \draw (v6) -- (v7) ;
       \draw (v7) -- (v8) ;
       \draw (v8) -- (v9) ;
       \draw[red] (v9) -- (v10) ;
    \filldraw[fill=black] (v1) circle (.0666) node[left, xshift=-.5cm] {(a), (b)};
  \filldraw[fill=white] (v2) circle (.1) ;
  \filldraw[fill=black] (v3) circle (.0666);
    \filldraw[fill=red] (v4) circle (.0666);
      \filldraw[fill=red] (v5) circle (.0666);
        \filldraw[fill=black] (v6) circle (.0666);
          \filldraw[fill=white] (v7) circle (.1);
           \filldraw[fill=black] (v8) circle (.0666);
             \filldraw[fill=red] (v9) circle (.0666);
               \filldraw[fill=red] (v10) circle (.0666);
   \end{tikzpicture}
   \begin{tikzpicture}[scale=1]
  \coordinate  (v1) at (0,0); 
   \coordinate  (v2) at (1,0);
   \coordinate (v3) at (2,0); 
   \coordinate (v4) at (3,0); 
   \coordinate (v5) at (4,0); 
   \coordinate (v6) at (5,0); 
   \coordinate (v7) at (6,0); 
    \coordinate (v8) at (7,0); 
        \coordinate (v9) at (8,0); 
            \coordinate (v10) at (9,0); 
   \draw (v1) -- (v2) ;
   \draw (v2) -- (v3) ;
    \draw (v3) -- (v4) ;
     \draw (v4) -- (v5);
      \draw[red] (v5) -- (v6) ;
       \draw[red] (v6) -- (v7) ;
       \draw[red] (v7) -- (v8) ;
       \draw[red] (v8) -- (v9) ;
       \draw[red] (v9) -- (v10) ;
    \filldraw[fill=white] (v1) circle (.1) node[left, xshift=-1cm] {(c)};
  \filldraw[fill=black] (v2) circle (.0666) ;
  \filldraw[fill=white] (v3) circle (.1);
    \filldraw[fill=black] (v4) circle (.0666);
      \filldraw[fill=red] (v5) circle (.0666);
        \filldraw[fill=red] (v6) circle (.0666);
          \filldraw[fill=red] (v7) circle (.0666);
           \filldraw[fill=red] (v8) circle (.0666);
             \filldraw[fill=red] (v9) circle (.0666);
               \filldraw[fill=red] (v10) circle (.0666);
   \end{tikzpicture}
   \caption{Examples for the cases (a), (b) and (c) with $k=2<p=3$ in the graph $P_{10}$. 2-marked vertices are colored in white, the graph $G\mi N[U]$ is colored in red.}
   \label{fig:egcase2}
\end{figure} 
\end{proof}

Although the preceding proof is specifically tailored to the case of paths and cyclic graphs, the statement holds also for many other graphs, including the examples in the next section, some cubic graphs and possibly all ladder graphs (checked for up to six rungs).

It is therefore natural to ask the following
\begin{q}
For which (families of) graphs $G$ does the statement of Theorem \ref{thm:vp} hold?
\end{q}

For instance, the independence complex of a forest is homotopy equivalent to either a point or a sphere (Corollary 6.1 in \cite{ehrenborg}, see also \cite{engstrom}), but a concrete characterization as in \ref{eq:homtype} is not known. Nevertheless, this suggests that Theorem \ref{thm:vp} could very well hold also for forests. 

An obvious further generalization is 
\begin{q}
Does the statement of Theorem \ref{thm:vp} hold for higher independence complexes? If yes, for which (families of) graphs?
\end{q}

\section{Examples}\label{sec:eg}
In this section we look at some examples to see the machinery at work.

Our first goal is to compute the homology of $\I P$ for $P$ the Petersen graph. For this we need a preparatory calculation. Throughout this section let $H_n$ denote $H_n(T_{\bullet,0},d)$.

\begin{eg}[$C_6$, the cyclic graph on six vertices]
To set up the spectral sequence for $H(\I {C_6})$ we need to calculate the homology of the complexes in \eqref{eq:dcomplex2} for $i=1,2,3$.  
 
 Removing any vertex with its neighbors from $C_6$ gives a path $P_3$ on three vertices, so 
 \begin{equation*}
(T_{\bullet,1},d) \cong \bigoplus_{k=1}^6 (T_{\bullet-1}(P_3),d).
\end{equation*}
 The homology of $\I{P_3}$ may be calculated directly, or simply read off from Figure \ref{fig:g,icg,2icg}. We find
 \begin{equation*}
  H_n(T_{\bullet,1},d) \cong \begin{cases} \mb Z^6 & n=1, \\ 0 & n \neq 1. \end{cases}
 \end{equation*}

 Removing two independent vertices together with their neighborhoods produces either an empty graph or a single vertex. The latter has trivial homology while the former adds a copy of $\mb Z$ in degree zero. There are three different maximum independent sets of size two, so 
  \begin{equation*}
 H_n(T_{\bullet,2},d) \cong \begin{cases} \mb Z^3 & n=0, \\ 0 & n \neq 0. \end{cases}
 \end{equation*}

Finally, $C_6$ has two maximal independent sets of size three, 
 \begin{equation*}
 H_n(T_{\bullet,3},d) \cong \begin{cases} \mb Z^2 & n=0, \\ 0 & n \neq 0. \end{cases}
 \end{equation*}

Filling out the first page of the associated spectral sequence
\vspace{.35cm}
\begin{equation*}
\begin{sseq}[grid=none,labels=none,entrysize=.8cm]{0...3}{0...3}
\ssdrop{0} \ssmove{1}{0} \ssdrop{H_1} \ssmove{1}{0} \ssdrop{H_2} \ssmove{1}{0} \ssdrop{H_3} 

\ssmoveto{1}{1} \ssdrop{0} \ssmove{1}{0} \ssdrop{\mb Z^6} \ssmove{1}{0} \ssdrop{0} 

\ssmoveto{2}{2} \ssdrop{\mb Z^3} \ssmove{1}{0} \ssdrop{0} 

\ssmoveto{3}{3} \ssdrop{\mb Z^2} 
\end{sseq}
\end{equation*}
we immediately deduce that $H_3$ must vanish and $H_2 \cong \mathrm{im}\ \! d^1_{2,0} \leq \mb Z^6$. The next page is
\vspace{.35cm}
\begin{equation*}
\begin{sseq}[grid=none,labels=none,entrysize=.8cm]{0...3}{0...3}
\ssdrop{0} \ssmove{1}{0} \ssdrop{H_1} \ssmove{1}{0} \ssdrop{0} \ssmove{1}{0} \ssdrop{0} 

\ssmoveto{1}{1} \ssdrop{0} \ssmove{1}{0} \ssdrop{E^2_{2,1} } \ssmove{1}{0} \ssdrop{0} 

\ssmoveto{2}{2} \ssdrop{E^2_{2,2}} \ssmove{1}{0} \ssdrop{0} 

\ssmoveto{3}{3} \ssdrop{\mb Z^2} 
\end{sseq}
\end{equation*}
with $E^2_{2,1} \cong \ker d^1_{2,1} / H_2$ and $E^2_{2,2}  \cong \mb Z^3 / \mathrm{im}\ \! d^1_{2,1}$. From Proposition \ref{thm:properties} we know that $E^2_{2,2}\cong 0$. Hence, $\image d^1_{2,1} \cong \mb Z^3$ and $\ker d^1_{2,1} \cong \mb Z^3$.

Since this page's differential $d^2$ goes one step to the right and two steps up, we must have $E^2_{2,1} \cong \mb Z$ and $H_1 \cong E^2_{2,2}\cong 0$ for the spectral sequence to exhibit its expected convergence behavior.
Putting everything together we conclude
\begin{equation*}
 H_1 \cong 0, \ H_2 \cong \mb Z^2 \ \Longrightarrow \tilde{H}_n(\I {C_6}) \cong \begin{cases}
                                                                                  \mb Z^2 & \text{ if } n=1, \\
                                                                                  0 & \text{ else.}
                                                                                 \end{cases}
\end{equation*}
\end{eg}

\begin{eg}[The Petersen graph $P$]
 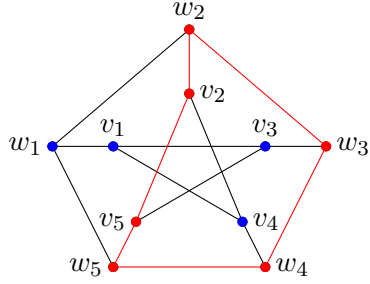
\begin{figure}
  \begin{tikzpicture}[scale=1]
  \coordinate  (v1) at (-1,0); 
   \coordinate  (v2) at (0,0.7);
   \coordinate (v3) at (1,0);
   \coordinate (v4) at (0.7,-1);
   \coordinate (v5) at (-0.7,-1);
   \draw (v1) -- (v3) ;
   \draw (v1) -- (v4) ;
   \draw (v2) -- (v4) ;
   \draw[red] (v2) -- (v5) ;
   \draw (v3) -- (v5) ;
  \fill[] (v1) circle (.0666cm) node[above]{$v_1$};
   \fill[] (v2) circle (.0666cm) node[right]{$v_2$};
\fill[] (v3) circle (.0666cm) node[above]{$v_3$};
\fill[] (v4) circle (.0666cm) node[right]{$v_4$};
\fill[] (v5) circle (.0666cm) node[left]{$v_5$};
  \coordinate  (w1) at (-1.8,0); 
   \coordinate  (w2) at (0,1.55);
   \coordinate (w3) at (1.8,0);
   \coordinate (w4) at (1,-1.6);
   \coordinate (w5) at (-1,-1.6);
   \draw (v1) -- (w1) ;
   \draw (v3) -- (w3) ;
   \draw (v4) -- (w4) ;
    \draw (w1) -- (w2) ;
   \draw (w5) -- (w1) ;
  \fill[] (w1) circle (.0666cm) node[left]{$w_1$};
   \fill[] (w2) circle (.0666cm) node[above]{$w_2$};
\fill[] (w3) circle (.0666cm) node[right]{$w_3$};
\fill[] (w4) circle (.0666cm) node[right]{$w_4$};
\fill[] (w5) circle (.0666cm) node[left]{$w_5$}; 
\draw[red] (w2) -- (w3) ;
   \draw[red] (w3) -- (w4) ;
   \draw[red] (w4) -- (w5) ;
   \draw[red] (v5) -- (w5) ;
   \draw[red] (v2) -- (w2) ;
 \fill[blue] (v1) circle (.0666cm);
 \fill[blue] (w1) circle (.0666cm);
 \fill[blue] (v3) circle (.0666cm);
 \fill[blue] (v4) circle (.0666cm);
\fill[red] (w2) circle (.0666cm);
\fill[red] (v2) circle (.0666cm);
\fill[red] (v5) circle (.0666cm);
\fill[red] (w5) circle (.0666cm);
\fill[red] (w4) circle (.0666cm);
\fill[red] (w3) circle (.0666cm);
\end{tikzpicture} 
   \caption{The Petersen graph $P$. The closed neighborhood $N[v_1]$ is depicted in blue, its complement graph $P\mi N[v_1]$ in red.}\label{fig:peter}
 \end{figure}
  Removing the ball around each of the vertices of $P$ produces a cyclic graph $C_6$ on six vertices. See Figure \ref{fig:peter} for the case of an ``interior'' vertex and note that we get an isomorphic graph if we do the same with one of the $w_i$, $i=1, \ldots, 5$. 
  
  Using the previous example we have thus
    \begin{equation*}
 H_n(T_{\bullet,1},d) \cong \begin{cases} \mb Z^{20} & n=2, \\ 0 & n \neq 2. \end{cases}
 \end{equation*}

 Deleting another vertex and its neighbors in the remaining graph produces $P_3$, a path on three vertices. There are thirty different ways of doing so, hence
   \begin{equation*}
 H_n(T_{\bullet,2},d) \cong \begin{cases} \mb Z^{30} & n=1, \\ 0 & n \neq 1. \end{cases}
 \end{equation*}
 
 Arguing for the remaining $P_3$ as in the previous example and counting all different ways of deleting three vertices and their neighborhoods in $P$, we find
   \begin{equation*}
 H_n(T_{\bullet,3},d) \cong \begin{cases} \mb Z^{10} & n=0, \\ 0 & n \neq 0.\end{cases}
 \end{equation*}
 
 The Petersen graph has five maximum independent sets of size four, so that
 \begin{equation*}
 H_n(T_{\bullet,4},d) \cong \begin{cases} \mb Z^{5} & n=0, \\ 0 & n \neq 0. \end{cases}
 \end{equation*}
 
 The first page of the associated spectral sequence is then given by
 \vspace{.35cm}
\begin{equation*}
\begin{sseq}[grid=none,labels=none,entrysize=.8cm]{0...4}{0...4}
\ssdrop{0} \ssmove{1}{0} \ssdrop{H_1} \ssmove{1}{0} \ssdrop{H_2} \ssmove{1}{0} \ssdrop{H_3} \ssmove{1}{0} \ssdrop{H_4} 

\ssmoveto{1}{1} \ssdrop{0} \ssmove{1}{0} \ssdrop{0} \ssmove{1}{0}  \ssdrop{\mb Z^{20} } \ssmove{1}{0} \ssdrop{0} 

\ssmoveto{2}{2} \ssdrop{0} \ssmove{1}{0} \ssdrop{\mb Z^{30}} \ssmove{1}{0} \ssdrop{0} 

\ssmoveto{3}{3} \ssdrop{\mb Z^{10}}  \ssmove{1}{0}  \ssdrop{0}

\ssmoveto{4}{4} \ssdrop{\mb Z^{5}} 
\end{sseq}
\end{equation*}

We deduce $H_3 \cong \image d^1_{3,0}$ and $H_4 \cong 0$. Furthermore, it must hold that $\image d^1_{3,3}\cong \mb Z^{10}$ because $E^2_{3,3}\cong 0$ by Proposition \ref{thm:properties}. This implies $\ker d^1_{3,3}\cong \mb Z^{20}$.

Therefore, we find on the $E_2$-page
\begin{equation*}
\begin{sseq}[grid=none,labels=none,entrysize=.8cm]{0...4}{0...4}
\ssdrop{0} \ssmove{1}{0} \ssdrop{H_1} \ssmove{1}{0} \ssdrop{H_2} \ssmove{1}{0} \ssdrop{0} \ssmove{1}{0} \ssdrop{0} 

\ssmoveto{1}{1} \ssdrop{0} \ssmove{1}{0} \ssdrop{0} \ssmove{1}{0}  \ssdrop{X/H_3 } \ssmove{1}{0} \ssdrop{0} 

\ssmoveto{2}{2} \ssdrop{0} \ssmove{1}{0} \ssdrop{\mb Z^{20}/Y} \ssmove{1}{0} \ssdrop{0} 

\ssmoveto{3}{3} \ssdrop{0}  \ssmove{1}{0}  \ssdrop{0}

\ssmoveto{4}{4} \ssdrop{\mb Z^{5}} 
\end{sseq}
\end{equation*}
for $X:= \ker d^1_{3,1}$ and $Y:=\image d^1_{3,1}$, satisfying $X \oplus Y \cong \mb Z^{20}$. The only nontrivial differentials are
\begin{equation*}
 d^2_{1,0}:H_1 \to 0,\quad  d^2_{2,0}: H_2 \to \mb Z^{20}/Y,  \quad  d^2_{3,2}: \mb Z^{20}/Y \to  \mb Z^5, \quad d^2_{3,1}: X/H_3 \to 0.
\end{equation*}

Convergence of the spectral sequence implies
\begin{equation*}
 H_1 \cong 0,\quad   \image d^2_{3,2} \cong \mb Z^4, \quad \ker d^2_{3,2} \cong H_2, \quad X \cong H_3, 
\end{equation*}
and, using $X \oplus Y \cong \mb Z^{20}$, this is equivalent to $H_3 \cong \mb Z^4 \oplus H_2$. 

We see that the spectral sequence does not always solve the problem completely; additional calculations may be necessary. In the present case one finds $H_2\cong 0$, so that 
\begin{equation*}
 \tilde H_n(\I P) \cong \begin{cases} \mb Z^{4} & n=2, \\ 0 & n \neq 2. \end{cases}
 \end{equation*}
\end{eg}

\begin{eg}[$K$, the 1-skeleton of a three dimensional cube] Here the $E^1$-page of the associated spectral sequence looks like
 \begin{equation*}
\begin{sseq}[grid=none,labels=none,entrysize=.8cm]{0...4}{0...4}
\ssdrop{0} \ssmove{1}{0} \ssdrop{H_1} \ssmove{1}{0} \ssdrop{H_2} \ssmove{1}{0} \ssdrop{H_3} \ssmove{1}{0} \ssdrop{H_4} 

\ssmoveto{1}{1} \ssdrop{0} \ssmove{1}{0} \ssdrop{\mb Z^8} \ssmove{1}{0}  \ssdrop{0 } \ssmove{1}{0} \ssdrop{0} 

\ssmoveto{2}{2} \ssdrop{\mb Z^4} \ssmove{1}{0} \ssdrop{0} \ssmove{1}{0} \ssdrop{0} 

\ssmoveto{3}{3} \ssdrop{0}  \ssmove{1}{0}  \ssdrop{0}

\ssmoveto{4}{4} \ssdrop{\mb Z^{2}} 
\end{sseq}
\end{equation*}
from which it follows that
\begin{equation*}
 \tilde H_n(\I K) \cong \begin{cases} \mb Z^{3} & n=1, \\ 0 & n \neq 1. \end{cases}
 \end{equation*}
 
  The details of the computation are left to the reader. 
\end{eg}

We finish with two examples on the homology of 2-independence complexes. Note that Proposition \ref{prop:dcomplex} still applies, but in \eqref{eq:dcomplex} we have to replace the graphs $G- N[U]$ appropriately.

In the following let $T_{i,j}$ be given as in Definition \ref{defn:T}, except that markings $m:V \to \{0,1,2\}$ are now defined by requiring that $V_m=m^{-1}(\{1,2\})$ is a 2-independent set in $G$.

\begin{eg}[$C_4$, the cyclic graph on four vertices] \label{eg:2ind}
We consider the 2-independence complex $\mathrm{Ind}_2(C_4)$ whose geometric realization is homeomorphic to the 1-skeleton of a tetrahedron $\Delta^3$.

To fill out the first page of the associated spectral sequence we need to know the homology of the complexes $(T_{\bullet,j},d)$ for $j=1,2$.

Let a single vertex $v$ of $C_4$ be 2-marked. We may still mark any of the remaining three vertices without violating the condition of 2-independence, but not more. This means that in \eqref{eq:dcomplex} we have to replace each $G\mi N[v]$ by a $K_3$, the complete graph on the vertex set $V\setdiff \{ v\}$,
\begin{equation*}
(T_{\bullet,1},d) \cong \bigoplus_{k=1}^4 (C_{\bullet-1}(K_3),\partial).
\end{equation*}

Now let $j=2$, i.e.\ two vertices be 2-marked. Every such 2-independent set is already maximum, so
\begin{equation*}
(T_{\bullet,2},d) \cong \bigoplus_{ \substack{ U\subset V \text{ maximum} \\ \text{2-independent}}} (C_{\bullet-1}(\emptyset),\partial).
\end{equation*}

The homology groups of the latter two complexes are easy to compute: For the first one observe that $\I {K_3}$ consists of three disjoint vertices, for the second one note that there are six maximum 2-independent sets in $C_4$. Thus,
\begin{equation*}
H_n(T_{\bullet,1},d) \cong \begin{cases} (\mb Z^2)^4 & n=1, \\ 0 & n \neq 1, \end{cases} \text{ and } H_n(T_{\bullet,2},d) \cong \begin{cases} \mb Z^6 & n=0, \\ 0 & n \neq 0. \end{cases}
\end{equation*}

The first page of the associated spectral sequence is then
\vspace{.35cm}
\begin{equation*}
\begin{sseq}[grid=none,labels=none,entrysize=.8cm]{0...2}{0...2}
\ssdrop{0} \ssmove{1}{0} \ssdrop{H_1} \ssmove{1}{0} \ssdrop{H_2} 

\ssmoveto{1}{1} \ssdrop{0} \ssmove{1}{0} \ssdrop{\mb Z^8} 
\ssmoveto{2}{2} \ssdrop{\mb Z^6} 
\end{sseq}
\end{equation*}
which implies $H_1\cong 0$ and $H_2\cong \mb Z^3$ (the other possible solution, $H_1\cong \mb Z^5$ and $H_2\cong \mb Z^8$, cannot be true -- the rank of $H_1$ is always less than the number of vertices). We conclude 
\begin{equation*}\tilde{H}_n(\mathrm{Ind}_2(G))\cong \begin{cases}
                                                                                                  \mb Z^3 & \text{ if } n=1, \\
                                                                                                  0 & \text{ else.}                                                                                             
                                                                                                \end{cases}                                                                                                \end{equation*}
\end{eg}

\begin{eg}[$C_5$, the cyclic graph on five vertices] \label{eg:2ind2}
$C_5$ admits 2-independent sets of cardinality up to three, so we need to find the homology of the complexes $(T_{\bullet,j},d)$ for $j=1,2,3$.

Let a single vertex of $C_5$ be 2-marked, say $v_1$. Ordering the vertices of $C_5$ cyclically, its maximal 2-independent sets containing $v_1$ are 
\begin{equation*}
\{v_1,v_2,v_4 \}, \{v_1,v_3,v_4 \},\{v_1,v_3,v_5 \}.
\end{equation*}
To model all allowed 1-markings if $v_1$ is 2-marked, we have to replace in the decomposition formula \eqref{eq:dcomplex} the summand corresponding to  $C_5 - N[v_1]$ by a path $P_4$ on four vertices $v_2,v_3,v_4,v_5$ with edge set 
\begin{equation*}
E(P_4)= \big\{ \{v_3,v_2 \}, \{v_2,v_5 \}, \{v_5,v_4 \} \big\}.
\end{equation*}
Thus, using that $\I {P_4}$ is contractible,
\begin{equation*}
(T_{\bullet,1},d) \cong \bigoplus_{k=1}^5 (C_{\bullet-1}(P_4),\partial) \quad \Longrightarrow \quad H_n(T_{\bullet,1},d) \cong 0 \text{ for all } n \in \mb N.
\end{equation*}

Now let $j=2$, i.e.\ two vertices be 2-marked. The graphs encoding the remaining possible markings consist of either a single vertex or a $K_2$, two vertices connected by an edge (if we start with $v_1$ these cases correspond to 2-marking the sets $\{v_1,v_2\}$, $ \{v_1,v_5\}$ or $\{v_1,v_3\}$, $\{v_1,v_4\}$, respectively),
\begin{equation*}
(T_{\bullet,2},d) \cong \Big( \bigoplus_{ \{v_i,v_j\} \in  E} (C_{\bullet-1}(*),\partial) \Big) \oplus 
\Big( \bigoplus_{ \substack{ \{v_i,v_j\} \subset  V \\ |i-j| \in \{2,3\} }} (C_{\bullet-1}(K_2),\partial) \Big)
\end{equation*}
The first case has trivial homology, the latter contributes a copy of $\mb Z$ in degree one,
 \begin{equation*}
 H_n(T_{\bullet,2},d) \cong \begin{cases} \mb Z^5 & n=1, \\ 0 & n \neq 1. \end{cases}
\end{equation*}

Lastly, there are five maximal 2-independent sets of size three, 
\begin{equation*} 
H_n(T_{\bullet,3},d) \cong \begin{cases} \mb Z^5, & n=0, \\ 0 & n \neq 0. \end{cases}
\end{equation*}

Filling out the first page of the associated spectral sequence gives
\vspace{.35cm}
\begin{equation*}
\begin{sseq}[grid=none,labels=none,entrysize=.8cm]{0...3}{0...3}
\ssdrop{0} \ssmove{1}{0} \ssdrop{H_1} \ssmove{1}{0} \ssdrop{H_2} \ssmove{1}{0} \ssdrop{H_3} 

\ssmoveto{1}{1} \ssdrop{0} \ssmove{1}{0}  \ssdrop{0} \ssmove{1}{0}  \ssdrop{0} \ssmove{1}{0}

\ssmoveto{2}{2}  \ssdrop{0} \ssmove{1}{0} \ssdrop{\mb Z^5} 

\ssmoveto{3}{3}  \ssdrop{\mb Z^5} 
\end{sseq}
\end{equation*}
so that $H_3 \cong 0$. 
The next page $E^2$ reads 
\vspace{.35cm}
\begin{equation*}
\begin{sseq}[grid=none,labels=none,entrysize=.8cm]{0...3}{0...3}
\ssdrop{0} \ssmove{1}{0} \ssdrop{H_1} \ssmove{1}{0} \ssdrop{H_2} \ssmove{1}{0} \ssdrop{0} 

\ssmoveto{1}{1} \ssdrop{0} \ssmove{1}{0}  \ssdrop{0} \ssmove{1}{0}  \ssdrop{0} \ssmove{1}{0}

\ssmoveto{2}{2}  \ssdrop{0} \ssmove{1}{0} \ssdrop{X} 

\ssmoveto{3}{3}  \ssdrop{\mb Z^5/Y} 
\end{sseq}
\end{equation*}
with $X:=\ker d^1_{3,2}$ and $Y:= \image d^1_{3,2}$, $X\oplus Y \cong \mb Z^5$.

 For the spectral sequence to converge accordingly, we must have $H_2 \cong X \ncong 0$ (if $\ker d^1_{3,2}\cong 0$, then $Y\cong 0$) and $( \mb Z^5/Y ) / H_1 \cong \mb Z$. This shows $ H_2 \cong  H_1 \oplus\mb Z$. Now, either by inspecting the differential $d^1_{3,2}$ more closely, or by simply noting that $\mathrm{Ind}_2(C_5)$ is connected, we find $H_1 \cong 0$ and therefore
\begin{equation*}
 \tilde{H}_n(\mathrm{Ind}_2(C_5))\cong \begin{cases}\mb Z & n=1, \\ 0 & n\neq 1. \end{cases}
\end{equation*}

\end{eg}

\bibliography{ref}
\bibliographystyle{alpha}
\end{document}